\theoremstyle{plain}
\newtheorem{thm}{Theorem}
\newtheorem{cor}[thm]{Corollary}
\newtheorem{lem}[thm]{Lemma}
\theoremstyle{definition}
\theoremstyle{definition}
\newtheorem{ex}{Example}
\def    \bs     {\boldsymbol}
\def    \C      {\mathbb{F}}
\def    \N      {\mathbb{N}}
\def    \P      {\mathbb{P}}
\def    \U      {\boldsymbol{U}}
\def    \E      {\boldsymbol{E}}
\def    \R      {\mathbb{R}}
\def    \sst        {\scriptscriptstyle}
\title[Orthogonal polynomials via invariant theory]{Orthogonal polynomials through the invariant theory of binary forms}
\author{P. Petrullo, D. Senato, R. Simone}
\address{Dipartimento di Matematica, Informatica e Economia - 
Universit\`a degli Studi della Basilicata. Via dell'Ateneo
Lucano 10, Potenza, Italy.}
\thanks{\emph{E-mail}. p.petrullo@gmail.com,domenico.senato@unibas.it,rosaria.simone@unibas.it.}
\begin{document}
\begin{abstract}
We present an algebraic theory of orthogonal polynomials in
several variables that includes classical orthogonal polynomials
as a special case. Our bottom line is a straightforward connection
between apolarity of binary forms and the inner product provided
by a linear functional defined on a polynomial ring. Explicit
determinantal formulae and multivariable extension of the
Heine integral formula are stated. Moreover, a general family of
covariants that includes transvectants is introduced. Such
covariants turn out to be the average value of classical basis of symmetric
polynomials over a set of roots of suitable orthogonal
polynomials.
\end{abstract}
\keywords{Orthogonal polynomials; binary forms; apolarity; symmetric polynomials.}
\maketitle
\section{Introduction}
In the seventies the core of the classical theory of invariants
due to Gordan, Clebsch, Capelli, Hodge and Igusa, has been approached
through the combinatorics of Young tableaux \cite{DRS}. In the
same years, a characteristic free approach to the invariants of
classical groups has been stated \cite{DeCProc}. In the eighties,
the invariant theory of binary forms was rewritten by a modern symbolic approach \cite{KungRota} having its roots in the pioneering symbolic methods of the early
900 \cite{GraceYoung}. 

The aim of the present paper is to provide an algebraic setting
for orthogonal polynomials by means of tools arising
from the invariant theory of binary forms. Our bottom line is a
direct connection between apolarity \cite{EreRota,SB} and the
inner product provided by a linear functional defined on a ring
of polynomials. This connection shows that a noteworthy family of covariants
and orthogonal polynomial systems \cite{Chihara}
essentially are the same object. Then, the presentation
of covariants of binary forms as invariants of the general linear group $Gl_2(\C)$
leads to a symbolic treatment of orthogonal polynomials. In this picture,
the orthogonality property of a sequence of polynomials is traced back to a certain vanishing
property of their symbolic counterpart. The determinantal
formula which expresses orthogonal polynomials in terms of the moments of the associated linear functional, as well as the Heine integral formula \cite{Ism} arise when the coefficients of binary forms are the moments of some weight function defined on the real line.

The symbolic expression of orthogonal polynomials can be generalized to obtain a wider set of covariants which  includes the Hessian of a binary form and, more generally, the whole family of transvectants \cite{Brini2}. By means of a multivariable version of the classical Gauss quadrature formula, such covariants are shown to be a weighted average of Schur symmetric polynomials and monomial symmetric polynomials over the roots of suitable orthogonal polynomials.

Finally, invariants of $Gl_2(\C)^{d+1}$ are taken as symbolic counterparts of covariants of $2(d+1)$-ary forms. An extension of apolarity to this context leads to orthogonal polynomials in several variables \cite{Xu}. Hence, extensions of the classical determinantal formula and of the Heine integral formula are obtained.

Syntax and techniques we make use of arise from a rethinking of
\cite{KungRota} grounded on more recent developments of umbral
methods \cite{DiNSen,RotTay}. In this framework, a
$\C[x_0]$-linear operator $\E_0\colon\C[x_0][x_1,x_2,\ldots]\to\C[x_0]$ maps the powers of
$x_1,x_2,\ldots$ to coefficients of binary forms. This suggests a
probabilistic interpretation of each symbolic expression whenever
the indeterminates are replaced by random variables and $\E_0$
works as the expectation functional. In particular, this enables
us to state explicit formulae for the distribution of the random
discriminants \cite{Lu} and to give a probabilistic description
of the symbolic counterpart of an orthogonal polynomial. 

For a survey on classical invariant theory we refer to
\cite{KraftProcesi}. For a focus on the symbolic methods we refer to \cite{Brini1,Grosshans}.

\section{Invariant theory of binary forms}
All results of this section are essentially taken from the paper
of Kung and Rota~\cite{KungRota}. In order to make the notation
more relevant to the theory of orthogonal polynomials, some minor
adjustment is made. So that, instead of the Roman variables
$(u_1,u_2)$ we use $(x_0,y_0)$, and in place of Greek letters
$(\alpha_1,\alpha_2), (\beta_1,\beta_2), \ldots$ we write
$(x_1,y_1)$, $(x_2,y_2)$, \ldots. The value taken by an umbral
operator $\U$ on a polynomial $p$ will be written $\U\,p$. The coefficients of a generic
form of degree $n$, originally written $A_0,A_1,\ldots,A_n$, occur
here as $(-1)^{n}a_0, (-1)^{n-1}a_1,\ldots,a_n$. Therefore,
covariants of binary forms of degree $n$ are expressed as polynomials in
$a_0,a_1,\ldots,a_n,x_0,y_0$ instead of $A_0,A_1,\ldots,A_n,X,Y$.

Hereafter, $\C$, $\N$ and $\P$ will denote a field of characteristic zero, the set of all nonnegative integers, and the set of all positive integers, respectively. We consider two infinite sets $\bs{x}=\{x_i\,|\,i\in\N\}$ and $\bs{y}=\{y_i\,|\,i\in\N\}$, then we write $\C[\bs{x},\bs{y}]$ to denote the ring of polynomials taking coefficients in $\C$ and whose independent indeterminates belong to $\bs{x}\cup\bs{y}$. The general linear group $Gl_2(\C)$ acts on $\C[\bs{x},\bs{y}]$ via the standard matrix multiplication:
\begin{equation}\label{Glact}\left(\begin{matrix}g_{11}&g_{12}\\g_{21}&g_{22}\end{matrix}\right)\left(\begin{matrix}
x_i\\y_i\end{matrix}\right)=\left(\begin{matrix}
g_{11}x_i+g_{12}y_i\\g_{21}x_i+g_{22}y_i\end{matrix}\right) \text{ for all }i\in\N.\end{equation}
Hence, if $p\in\C[\bs{x},\bs{y}]$ and $g\in Gl_2(\C)$, $g\cdot
p$ denotes the polynomial obtained from $p$ by replacing all its indeterminates according to \eqref{Glact}. The polynomial $p$ is said to be an \textit{invariant of index $m$}, $m$ being an integer, if and only
if it satisfies
\[g\cdot p=(\det\,g)^m\,p \;\text{ for all }g \in Gl_2(\C).\]
The polynomials $[i\;j]=x_iy_j-x_jy_i$'s are named \textit{brackets}. They generate the subring $\C[\bs{x},\bs{y}]^{Gl_2(\C)}$ of all invariant polynomials in $\C[\bs{x},\bs{y}]$~\cite{Grosshans,KraftProcesi}.
Henceforth, we restrict the action of $Gl_2(\C)$ to pairs $(x_i,y_i)$'s with $i\in\P$, so that we have $g\cdot[i\;j]=(\det\,g)[i\;j]$ if and only if neither $i$ nor $j$ is $0$. 

Let $a_0,a_1,\ldots,a_n$ be independent indeterminates over $\C$. A \textit{generic binary form of degree $n$} is a polynomial $f(a_0,a_1,\ldots,a_n;x_0,y_0)$ of the type
\begin{equation}
\label{binf}f(a_0,a_1,\ldots,a_n;x_0,y_0)=\sum_{k=0}^n\binom{n}{k}(-1)^{n-k}a_k x_0^{n-k}y_0^{k}.
\end{equation}
If $a_0,a_1,\ldots,a_n$ are replaced by suitable elements in $\C$ then \eqref{binf} becomes a polynomial $f(x_0,y_0)$ that is homogeneous of degree $n$ in $x_0,y_0$, and that is usually referred to as \textit{binary form of degree $n$}. Any \textit{linear change of variables}, defined by
\begin{equation}
\label{linearchange}\varphi\,\bigg(\begin{array}{c}x_0\\y_0\end{array}\bigg)=\bigg(\begin{array}{cc}c_{11}&c_{12}\\c_{21}&c_{22}\end{array}\bigg)\bigg(\begin{array}{c}x_0\\y_0\end{array}\bigg) \text{ with } \det\,\varphi=c_{11}c_{22}-c_{12}c_{21}\neq 0,
\end{equation}
maps the generic form \eqref{binf} into a new form
\[f(\bar{a}_0,\bar{a}_1,\ldots,\bar{a}_n;x_0,y_0)=f(a_0,a_1,\ldots,a_n;c_{11}x_0+c_{12}y_0,c_{21}x_0+c_{22}y_0),\]
where $\bar{a}_0,\bar{a}_1,\ldots,\bar{a}_n$ are determined according to
\[f(\bar{a}_0,\bar{a}_1,\ldots,\bar{a}_n;x_0,y_0)=\sum_{k=0}^n\binom{n}{k}(-1)^{n-k}\bar{a}_kx_0^{n-k}y_0^{k}.\]
A \textit{covariant of index $m$ of binary forms of degree $n$} is a nonconstant polynomial, say $\mathcal{I}(a_0,a_1,\ldots,a_n;x_0,y_0)$, which satisfies
\small\[\mathcal{I}(\bar{a}_0,\bar{a}_1,\ldots,\bar{a}_n;x_0,y_0)=(\det\,\varphi)^m \mathcal{I}(a_0,a_1,\ldots,a_n;c_{11}x_0+c_{12}y_0,c_{21}x_0+c_{22}y_0),\]\normalsize
for every linear change of variables.

Fix $n\in\N$ and consider the linear operator
\[\U\colon\C[\bs{x},\bs{y}]\to\C[a_0,a_1,\ldots,a_n;x_0,y_0]\]
defined by the following conditions,
\begin{align}
\label{E(n)}\U\,x_i^{k_1}y_i^{k_2}&=\begin{cases}x_0^{k_1}y_0^{k_2}&\text{ if }i=0,\\a_{k_1}&\text{ if }k_1+k_2=n \text{ and }i\in \P,\\0& \text{ otherwise},
\end{cases}\\
\label{E(n)bis}\U\,x_0^{k_1}y_0^{k_2}x_1^{l_1}y_1^{l_2}x_2^{m_1}y_2^{m_2}\cdots &=\U\,x_0^{k_1}y_0^{k_2}\,\U\,x_1^{l_1}y_1^{l_2}\,\U\,x_2^{m_1}y_2^{m_2}\cdots,
\end{align}
for all $k_1,k_2,l_1,l_2,m_1,m_2,\ldots\in\N$.

This operator affords a representation of the generic form as a bracket. More explicitly, we have
\begin{equation}\label{fbracket}
\U\,[i\;0]^n=\U\,(x_i y_0-x_0 y_i)^n=f(a_0,a_1,\ldots,a_n;x_0,y_0),
\end{equation}
for all $i\in \P$. As a consequence, it is possible to reformulate the action of any linear change of variables $\varphi$ into an action of a certain $g^\varphi\in Gl_2(\C)$ on all pairs $(x_i,y_i)$'s with $i\in\P$. In fact, consider the map
\begin{equation}\label{map}
\varphi\mapsto g^\varphi=\left(\begin{matrix}c_{22}&-c_{12}\\-c_{21}&c_{11}\end{matrix}\right).
\end{equation}
Then,
\begin{equation}\label{tranlsate1}
g^\varphi\cdot[i\;0]^n=\big(x_i(c_{21}x_0+c_{22}y_0)-y_i(c_{11}x_0+c_{12}y_0)\big)^n.
\end{equation}
By comparing \eqref{linearchange}, \eqref{fbracket} and \eqref{tranlsate1} we recover
\begin{equation}\label{tranlsate2}
\U\,g^\varphi\cdot[i\;0]^n=f(\bar{a}_0,\bar{a}_1,\ldots,\bar{a}_n;x_0,y_0),
\end{equation}
which is equivalent to
\begin{equation}\label{tranlsate3}
\U\,(c_{22}x_i-c_{12}y_i)^{k_1}(-c_{21}x_i+c_{11}y_i)^{k_2}=\bar{a}_{k_1},
\end{equation}
for all $k_1,k_2\in\N$ such that $k_1+k_2=n$. At this point, a covariant of index $m$ of binary forms of degree $n$ is obtained by setting
\[\mathcal{I}(a_0,a_1,\ldots,a_n;x_0,y_0)=\U\,p,\]
where $p\in\C[\bs{x},\bs{y}]$ is a product of a finite number of brackets involving exactly $m$ brackets of type $[j\,i]$ (with $i\neq 0\neq j$). In fact, by virtue of \eqref{tranlsate3} it is not difficult to see that
\begin{multline*}(\det\,\varphi)^m\,\mathcal{I}(a_0,a_1,\ldots,a_n;c_{11}x_0+c_{12}y_0,c_{21}x_0+c_{22}y_0)\\=\U\,g^\varphi\cdot p=\mathcal{I}(\bar{a}_0,\bar{a}_1,\ldots,\bar{a}_n;x_0,y_0).
\end{multline*}
The turning point is that all covariants of binary forms of degree $n$ can be obtained essentially in this way. This remarkable result is known as First Fundamental Theorem~\cite{KungRota}.

Covariants can be seen as a special case of joint-covariants. Indeed, assume an ordered sequence $\left(f_i(a_{i0},a_{i1},\ldots,a_{in_i};x_0,y_0)\right)_{1\leq i\leq l}$ of gen\-eric binary forms is given, then let $\bs{n}=(n_1,n_2,\ldots,n_l)$ be the array of the corresponding degrees. A \textit{joint-covariant of index $m$ of binary forms of degrees $\bs{n}$} is a polynomial, say $\mathcal{I}(\ldots;a_{i0},a_{i1},\ldots,a_{in_i};\ldots;x_0,y_0)$, which satisfies
\begin{multline*}
\mathcal{I}(\ldots;\bar{a}_{i0},\bar{a}_{i1},\ldots,\bar{a}_{in_i};\ldots;x_0,y_0)\\
= (\det\,\varphi)^m\,\mathcal{I}(\ldots;a_{i0},a_{i1},\ldots,a_{in_i};\ldots;c_{11}x_0+c_{12}y_0,c_{21}x_0+c_{22}y_0),
\end{multline*}
for all linear changes of variables $\varphi$'s. Of course, if $l=1$ then
joint-covariants reduce to covariants. Now we are going to
generalize the umbral operator defined in \eqref{E(n)} and
\eqref{E(n)bis}. To this aim, choose pairwise disjoint infinite
sets $\P_1,\P_2,\ldots,\P_l$ satisfying $\P_1\cup \P_2\cup\cdots
\cup \P_l=\P$. Then, consider the linear operator
\[\U\colon\C[\bs{x},\bs{y}]\to\C[\ldots,a_{i0},a_{i1},\ldots,a_{in_i},\ldots;x_0,y_0],\]
defined by the following conditions,
\begin{align}
\label{E(nn)}
\U\,x_i^{k_1}y_i^{k_2}&=\begin{cases}x_0^{k_1}y_0^{k_2}&\text{ if }i=0,\\a_{jk_1}&\text{ if }k_1+k_2=n_j \text{ and }i\in \P_j,\\0& \text{ otherwise},\end{cases}\\
\label{E(nn)bis}\U\,x_0^{k_1}y_0^{k_2}x_1^{l_1}y_1^{l_2}x_2^{m_1}y_2^{m_2}\cdots
&=\U\,x_0^{k_1}y_0^{k_2}\,\U\,x_1^{l_1}y_1^{l_2}\,\U\,x_2^{m_1}y_2^{m_2}\cdots,
\end{align}
for all $k_1,k_2,l_1,l_2,m_1,m_2,\ldots\in\N$.

Therefore, a joint-covariant of index $m$ of binary forms of degree $\bs{n}$ is obtained by setting
\[\mathcal{I}(\ldots;a_{i0},a_{i1},\ldots,a_{in_i};\ldots;x_0,y_0)=\U\,p,\]
where $p\in\C[\bs{x},\bs{y}]$ is a product of a finite number of brackets involving exactly $m$ brackets of type $[j\,i]$ (with $i\neq 0\neq j$). Joint-covariants are often evaluated at the coefficients of given forms $f_1(x_0,y_0)$, $f_2(x_0,y_0)$, \ldots, $f_l(x_0,y_0)$. To this aim, we will also make use of linear operators of the type
\[\U(f_1,f_2,\ldots,f_l)\colon\C[\bs{x},\bs{y}]\to\C[x_0,y_0].\]
The value taken by $\U(f_1,f_2,\ldots,f_l)$ at $p$ will be written
$\U(f_1,f_2,\ldots,f_l)\,p$ and it is obtained from $\U\,p$ by replacing each $a_{ij}$ with the corresponding coefficient
extracted from $f_i(x_0,y_0)$. Thus, given a joint-covariant
$\mathcal{I}(\ldots$; $a_{i0}$, $a_{i1}$, \ldots, $a_{in_i}$;
\ldots; $x_0,y_0)=\U\,p$, we set
\[\mathcal{I}(f_1,f_2,\ldots,f_l)(x_0,y_0)=\U(f_1,f_2,\ldots,f_l)\,p,\]
so that $\mathcal{I}(f_1,f_2,\ldots,f_l)(x_0,y_0)$ is obtained by evaluating the joint-covariant $\mathcal{I}(\ldots;a_{i0},a_{i1},\ldots,a_{in_i};\ldots;x_0,y_0)$ at the coefficients of $f_1(x_0,y_0)$, $f_2(x_0,y_0)$, \ldots, $f_l(x_0,y_0)$.

In the sequel, we shall focus our attention on a special
joint-covariant named apolar covariant and defined in the
following way. Set $\P_1\cup\P_2=\P$, and $\bs{n}=(n,m)$ with
$n\geq m$. Then, define the \textit{apolar covariant}
to be the unique polynomial $\mathcal{A}(a_{10},a_{11},\ldots,a_{1n};a_{20},a_{21},\ldots,a_{2m};x_0,y_0)$
such that
\small\begin{equation}\label{apol}
\mathcal{A}(a_{10},a_{11},\ldots,a_{2n};a_{20},a_{21},\ldots,a_{2m};x_0,y_0)=\U\,[1\;0]^{n-m}[2\;1]^{m},
\end{equation}\normalsize
where we assume $1\in \P_1$ and $2\in \P_2$. Two forms,
$f_1(x_0,y_0)$ of degree $n$ and $f_2(x_0,y_0)$ of degree $m$, are said to be \textit{apolar} if and only if
$\mathcal{A}(f_1,f_2)(x_0,y_0)=0$. The apolar covariant induces a bilinear form, called the \textit{apolar form}, defined by $\{f_1,f_2\}=\mathcal{A}(f_1,f_2)(x_0,y_0)$.
The set of all forms of degree $m$ which are apolar to a given
form of degree $n$ is a $\C$-vector space. Let us summarize in the next theorem
the results that we will use in the following.
\begin{thm}
Let $n,m\in\P$ and assume $m\leq n$. Then,
\begin{enumerate}
\item the dimension of the space of all forms of degree $m$ which are apolar to a given form of degree $n$ equals $\max\{0,2m-n\}$;
\item if $n=2m-1$ and if a form $f(x_0,y_0)$ of degree $n$ is given, then there exists $g(x_0,y_0)$, uniquely determined up a to multiplicative factor, of degree $m$ and apolar to $f(x_0,y_0)$;
\item there exists a covariant $\mathcal{J}(a_0,a_1,\ldots,a_{2n-1};x_0,y_0)$ of binary forms of degree $2n-1$, such that if $f(x_0,y_0)$ is of degree $2n-1$ and if $g(x_0,y_0)=\mathcal{J}(f)(x_0,y_0)$ then $g(x,y)=0$ or $g(x,y)$ is a form of degree $n$ which satisfies $\{f,g\}=0$.
\end{enumerate}
\end{thm}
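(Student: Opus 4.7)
Parts (1) and (2) reduce to linear algebra applied to the apolar pairing; (3) calls for an explicit covariant construction.

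For (1), observe that the apolar covariant $\mathcal{A}=\U\,[1\;0]^{n-m}[2\;1]^m$ is linear in the coefficients of the second form, and only the factor $[1\;0]^{n-m}$ involves $(x_0,y_0)$, so $\{f,g\}$ is a form of degree $n-m$ in $(x_0,y_0)$. Fixing $f$ therefore defines a linear map $\Phi_f$ from the $(m+1)$-dimensional space of forms of degree $m$ to the $(n-m+1)$-dimensional space of forms of degree $n-m$, and the apolar space of $f$ is $\ker\Phi_f$. Rank-nullity gives $\dim\ker\Phi_f\geq\max\{0,2m-n\}$. For the reverse inequality it suffices to exhibit one $f$ on which $\Phi_f$ attains maximal rank, which is a Zariski-open condition on the coefficients of $f$. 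A convenient witness is $f_0=\prod_{i=1}^n(y_0-\lambda_i x_0)$ with pairwise distinct $\lambda_i$; direct computation shows that the determinantal obstruction to maximal rank factors as a product of differences $(\lambda_i-\lambda_j)$ and is therefore nonzero. Part (2) is then the specialization $n=2m-1$, giving apolar dimension $\max\{0,1\}=1$.

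For (3), write $g=\sum_{k=0}^n\binom{n}{k}(-1)^{n-k}b_k x_0^{n-k}y_0^k$ and expand the condition $\{f,g\}=0$ in a basis of forms of degree $n-1$ in $(x_0,y_0)$; this presents apolarity as a homogeneous $n\times(n+1)$ linear system in $b_0,\ldots,b_n$ whose coefficients are polynomials in $a_0,\ldots,a_{2n-1}$. Cramer's rule supplies the polynomial solution $b_k=(-1)^k\Delta_k$, where $\Delta_k$ is the $n\times n$ minor obtained by deleting the $k$-th column, and substituting back defines a polynomial $\mathcal{J}(a_0,\ldots,a_{2n-1};x_0,y_0)$ whose evaluation at $f$ is, by construction, either zero or a nonzero apolar form of degree $n$.

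The main obstacle is verifying that this $\mathcal{J}$ is genuinely a covariant of binary forms of degree $2n-1$. Under a linear change of variables $\varphi$, the equivariance of $\{f,g\}$ forces the coefficient matrix of the system to transform equivariantly, so each $\Delta_k$ acquires a uniform power of $\det\varphi$; the covariance identity and the correct covariant index then follow from careful bookkeeping of these factors. A cleaner alternative, which I would pursue in parallel as a cross-check, is to realize $\mathcal{J}$ symbolically as $\U\,p$ for a product $p$ of brackets built from $n$ independent umbrae equivalent to $f$: covariance is then automatic via the First Fundamental Theorem, and the required apolarity identity $\{f,\U\,p\}=0$ reduces to a combinatorial bracket cancellation, which I would verify by induction on $n$ starting from $n=2$ (where $\mathcal{J}$ coincides, up to a scalar, with the Hessian of a cubic).
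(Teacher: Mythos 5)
Note first that the paper does not prove this theorem: it is stated as a summary of results imported from Kung and Rota, and the only in-paper arguments bearing on it are the generalizations of the next section (the vanishing criterion, Theorem \ref{Th:main1}, and the determinantal formula of Theorem \ref{Th:det}, of which part (3) is the case $(n,m)\mapsto(2m-1,m)$). Measured against that material, your plan for (1) and (2) is sound. The apolar form $\{f,g\}$ is indeed homogeneous of degree $n-m$ in $(x_0,y_0)$ and bilinear in the coefficients, so rank--nullity gives $\dim\ker\Phi_f\ge\max\{0,2m-n\}$ for every $f$, and equality can only be a generic statement (for $f=x_0^n$ the kernel already has dimension $m$); you correctly read the theorem that way. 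The matrix of $\Phi_f$ is, up to column scalings by $\pm\binom{m}{j}$, the Hankel matrix $(a_{k+j})_{0\le k\le n-m,\,0\le j\le m}$, and for your witness $f_0$ the relevant maximal minor reduces to a Vandermonde computation; that step is asserted rather than carried out, but it is standard and acceptable as a sketch. In (3), your Cramer construction reproduces exactly the paper's determinant \eqref{det} specialized to $(2n-1,n)$, and the apolarity of the resulting $g$ is genuinely immediate: substituting $b_k=(-1)^k\Delta_k$ into any equation of the system yields a determinant with a repeated row, which is precisely the argument the paper uses for Theorem \ref{detformulti}.

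The genuine gap is the covariance of $\mathcal{J}$, which you flag as the main obstacle and then leave open on both proposed routes. The ``bookkeeping'' route requires showing that under $\varphi$ the $n$ Hankel rows transform among themselves by an invertible matrix depending only on $\varphi$ while the columns transform by the degree-$n$ symmetric power of $g^\varphi$, so that every maximal minor acquires one common power of $\det\varphi$ and the deleted-column index recombines into the transformation law of a degree-$n$ form; none of this is done. The symbolic route is the correct fix and is what the paper actually carries out, but you never identify the bracket monomial: it is $p=\prod_{1\le i<j\le n}[j\;i]\prod_{0\le i<j\le n}[j\;i]$ with $1,\ldots,n$ all equivalent to $f$, and covariance is then automatic from the First Fundamental Theorem. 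Moreover the identity $\{f,\U\,p\}=0$ is not an induction on $n$: it is a single antisymmetry argument (the paper's vanishing criterion) --- the polynomial $x_i^{k}y_i^{n-1-k}q$ changes sign under the transposition exchanging $i$ with $k+1$ inside the class $\P_1$, while $\U$ is blind to that transposition, so $\U$ annihilates it. Without either the explicit bracket presentation or the completed minor bookkeeping, your part (3) constructs an apolar form but has not yet produced a covariant.
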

The covariant $\mathcal{J}(a_0,a_1,\ldots,a_{2n-1};x_0,y_0)$ is sometimes referred to as the \textit{covariant J} \cite{Rota98}.
\section{Forms which are apolar to a given form}
In \cite{KungRota} both symbolic expressions and an explicit
determinantal formula for the covariant $J$ are given. In this
section we generalize these formulae to a wider family of
joint-covariants.

Choose $n,m\in\P$ such that $m\leq n$ and $l=2m-n\geq 1$. Moreover,
fix a partition $\P_1\cup\P_2\cup\cdots\cup\P_{l}=\P$, and set
$\bs{n}=(n,m,\ldots,m)\in\N^{l}$. Consider the joint-covariant
defined by
\small\begin{multline}\label{eq:main1}
\mathcal{J}_{n,m}(\ldots;a_{i0},a_{i1},\ldots,a_{in_i};\ldots;x_0,y_0)=\U\,\prod_{1\leq i<j\leq n-m+1}[j\;i]\prod_{0\leq i<j\leq m}[j\;i],\end{multline}\normalsize
where we assume $1,2,\ldots,n-m+1\in \P_1$, $n-m+2\in \P_2$,
$\ldots$, $m\in \P_{l}$. If a form $f(x_0,y_0)$ of degree $n$ is given, then a form $g(x_0,y_0)$ of degree $m$ and apolar to $f(x_0,y_0)$ can be obtained by suitably replacing each
$a_{ij}$ in $\mathcal{J}_{n,m}(\ldots;a_{i0},a_{i1},\ldots,a_{in_i};\ldots;x_0,y_0)$ with an element in $\C$. In detail, let $f(x_0,y_0)$ and $g(x_0,y_0)$ be of degree $n$ and $m$,
respectively. From \eqref{apol} we recover
\small\begin{multline}\label{apolsum}\mathcal{A}(f,g)(x_0,y_0)\\=\U(f)\,\sum_{0\leq k\leq n-m}\binom{n-m}{k}(-1)^{n-m-k}x_{1}^ky_{1}^{n-m-k}\,g(x_{1},y_{1})\,x_0^{n-m-k}y_0^{k},\end{multline}\normalsize
where $\U$ is the umbral operator in \eqref{E(n)} and \eqref{E(n)bis}, so that $\U(f)\,p$ is $\U\,p$ evaluated at the coefficients of $f(x_0,y_0)$. This means that $\{f,g\}=0$ if and only if
\begin{equation}\label{apol1}\U(f)\,x_{1}^ky_1^{n-m-k}\,g(x_{1},y_1)=0, \text{ for all }0\leq k\leq n-m.\end{equation}
The following criterion combined with \eqref{apol1} allows us to explicitly relate the joint-covariant \eqref{eq:main1} to the apolar covariant.
\begin{lem}[Vanishing criterion] Let $\U$ denote the operator defined by \eqref{E(nn)} and $\eqref{E(nn)bis}$, and assume that $p\in\C[\bs{x},\bs{y}]$ changes in sign by permuting two pairs, say $(x_{i_1},y_{i_1})$ and $(x_{i_2},y_{i_2})$, of its indeterminates such that $i_1,i_2\in\P_i$ for some $i$. Then we have $\U\,p=0$.
\end{lem}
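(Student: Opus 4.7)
The plan is to show that the umbral operator $\U$ is itself \emph{invariant} (not merely anti-invariant) under any transposition of pairs $(x_{i_1},y_{i_1})$ and $(x_{i_2},y_{i_2})$ with $i_1,i_2$ in the same block $\P_j$, and then contrast this with the anti-invariance of $p$. The contradiction in signs forces $\U\,p=0$ because the characteristic is zero.

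First, by linearity it suffices to check the invariance of $\U$ on a monomial, say $q = x_0^{a_0}y_0^{b_0}\prod_{i\in\P}x_i^{a_i}y_i^{b_i}$, where only finitely many exponents are nonzero. By the multiplicativity rule \eqref{E(nn)bis},
\[
\U\,q \;=\; \U\,x_0^{a_0}y_0^{b_0}\,\prod_{i\in\P}\U\,x_i^{a_i}y_i^{b_i}.
\]
The decisive observation is that, by the first line of \eqref{E(nn)}, the factor $\U\,x_i^{a_i}y_i^{b_i}$ depends only on the exponents $(a_i,b_i)$ and on which block $\P_j$ contains $i$, but not on $i$ itself: for any $i\in\P_j$ it equals $a_{ja_i}$ when $a_i+b_i=n_j$ and is $0$ otherwise.

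Now let $\sigma$ denote the substitution that exchanges $(x_{i_1},y_{i_1})$ with $(x_{i_2},y_{i_2})$. Since $i_1,i_2\in\P_j$ for the same $j$, applying $\sigma$ to $q$ swaps the two factors $\U\,x_{i_1}^{a_{i_1}}y_{i_1}^{b_{i_1}}$ and $\U\,x_{i_2}^{a_{i_2}}y_{i_2}^{b_{i_2}}$ in the product above with factors of the \emph{same value}, because moving the exponent pair $(a_{i_1},b_{i_1})$ from index $i_1$ to index $i_2$ (and vice versa) keeps the block unchanged. The product is commutative, hence
\[
\U(\sigma\cdot q)\;=\;\U\,q.
\]
Extending by linearity, $\U(\sigma\cdot p)=\U\,p$ for every $p\in\C[\bs{x},\bs{y}]$.

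Combining this with the hypothesis $\sigma\cdot p=-p$ gives $\U\,p=\U(\sigma\cdot p)=-\U\,p$, so $2\,\U\,p=0$ and therefore $\U\,p=0$ because $\C$ has characteristic zero. There is no real obstacle; the only subtle point, and the one I would highlight in the write-up, is that the independence of $\U\,x_i^{k_1}y_i^{k_2}$ from the particular index $i\in\P_j$ is precisely what makes $\U$ symmetric under intra-block transpositions, so that anti-symmetry of $p$ is then fatal.
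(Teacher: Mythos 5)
Your proof is correct and follows essentially the same route as the paper's: both establish that $\U$ is unchanged by the intra-block transposition (you simply make explicit, via \eqref{E(nn)bis} and the block-dependence of $\U\,x_i^{k_1}y_i^{k_2}$, what the paper asserts in one line) and then derive $\U\,p=-\U\,p$, hence $\U\,p=0$ in characteristic zero. No gaps.
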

\begin{proof}
According to \eqref{E(nn)} and \eqref{E(nn)bis}, since $i_1,i_2\in\P_i$ for some $i$ then the value of $\U\,p$ does not change if $(x_{i_1},y_{i_1})$ and $(x_{i_2},y_{i_2})$ are exchanged. On the other hand, $p$ changes in sign and this says $\U\,p=-\U\,p$. So, $\U\,p=0$.
\end{proof}
\begin{thm}\label{Th:main1}
Let $f(x_0,y_0)$ a binary form of degree $n$, choose $m\leq n$ and assume $l=2m-n\geq 1$.
Moreover, consider a sequence $(f_i(x_0,y_0))_{1\leq i\leq l}$ of forms of degrees $\bs{n}=(n,m,\ldots,m)$ such that $f_1(x_0,y_0)=f(x_0,y_0)$ and define
\[g(x_0,y_0)=\mathcal{J}_{n,m}(f_1,f_2,\ldots,f_{l})(x_0,y_0).\]
Then, we have $g(x_0,y_0)=0$ or $g(x_0,y_0)$ is a form of degree $m$ such that $\{f,g\}=0$.
\end{thm}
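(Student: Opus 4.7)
The plan is to verify $\mathcal{A}(f,g)(x_0,y_0)=0$ as a polynomial in $(x_0,y_0)$, which by \eqref{apol1} amounts to showing $\U(f)\,x_1^k y_1^{n-m-k}\,g(x_1,y_1)=0$ for every $0\leq k\leq n-m$. The first step is to express $g(x_1,y_1)$ through \emph{fresh} (``starred'') umbral indices, writing $g(x_1,y_1)=\widetilde{\U}\,\widetilde{p}$, where $\widetilde{p}$ is the product appearing in \eqref{eq:main1} after relabelling the former external index $0$ as $1$ and the former umbral indices $1,\ldots,m$ as $1^*,\ldots,m^*$. The composition $\widehat{\U}=\U(f)\circ\widetilde{\U}$ is then a single joint umbral operator whose crucial feature is that the index $1$ (processed by $\U(f)$) and each starred $i^*$ with $i\in\P_1$ are all assigned to the same form $f$ of degree $n$, so they merge into one block $B_1=\{1\}\cup\{i^*:i\in\P_1\}$ of cardinality $n-m+2$; the remaining starred indices keep their original $\P_j$-assignment. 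In particular, $\widehat{\U}$ is invariant under the symmetric group $S_{B_1}$.

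I would then factor $\widetilde{p}=V_1\cdot W_1\cdot\Xi$, with $V_1=\prod_{1\leq i<j\leq n-m+1}[j^*\,i^*]$ the Vandermonde on the starred part of $B_1$, $W_1=V_1\cdot\prod_{j=1}^{n-m+1}[j^*\,1]$ the full Vandermonde on $B_1$, and
\[
\Xi=\Bigl(\prod_{j=n-m+2}^{m}\prod_{\alpha\in B_1}[j^*\,\alpha]\Bigr)\cdot\prod_{n-m+2\leq i<j\leq m}[j^*\,i^*]
\]
manifestly $S_{B_1}$-invariant (its first double product is symmetric in $\alpha\in B_1$ by construction, and the second involves no $B_1$-indices). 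Consequently $\pi(\widetilde{p})=\mathrm{sgn}(\pi)\,\pi(V_1)\,W_1\,\Xi$ for every $\pi\in S_{B_1}$, and the identity $\widehat{\U}[\pi\cdot q]=\widehat{\U}[q]$ combined with averaging over $S_{B_1}$ gives
\[
\widehat{\U}\bigl[x_1^k y_1^{n-m-k}\widetilde{p}\bigr]=\frac{1}{(n-m+2)!}\,\widehat{\U}\Bigl[W_1\,\Xi\cdot\sum_{\pi\in S_{B_1}}\mathrm{sgn}(\pi)\,x_{\pi(1)}^{k}y_{\pi(1)}^{n-m-k}\,\pi(V_1)\Bigr].
\]

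To conclude, split each $\pi\in S_{B_1}$ as a coset representative of $\mathrm{Stab}(1)$ times a stabilizer element. Since $V_1$ is antisymmetric in its starred indices and $x_1^k y_1^{n-m-k}$ is fixed by $\mathrm{Stab}(1)$, the stabilizer sum collapses to the factor $(n-m+1)!$ and the remaining sum over cosets becomes
\[
(n-m+1)!\sum_{j=0}^{n-m+1}(-1)^j\,x_{\alpha_j}^{k}y_{\alpha_j}^{n-m-k}\,\widehat{V}_j,
\]
where $\alpha_0,\ldots,\alpha_{n-m+1}$ enumerates $B_1$ (with $\alpha_0=1$) and $\widehat{V}_j$ is the Vandermonde on $B_1\setminus\{\alpha_j\}$. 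This is the Laplace expansion along row $0$ of the $(n-m+2)\times(n-m+2)$ matrix whose row $0$ is $(x_\alpha^{k}y_\alpha^{n-m-k})_{\alpha\in B_1}$ and whose row $i$ (for $1\leq i\leq n-m+1$) is the monomial row $(x_\alpha^{i-1}y_\alpha^{n-m+1-i})_{\alpha\in B_1}$. Row $0$ coincides verbatim with row $k+1$, so the determinant vanishes and $\widehat{\U}[x_1^k y_1^{n-m-k}\widetilde{p}]=0$. The degree statement is immediate: each of the $m$ brackets $[j\,0]$ in $\mathcal{J}_{n,m}$'s defining product contributes exactly one factor of $(x_0,y_0)$, hence $g$ is either zero or homogeneous of degree $m$. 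The main hurdle is orchestrating the decomposition $\widetilde{p}=V_1 W_1 \Xi$ that exposes the enlarged Vandermonde $W_1$ on $B_1$; once this is in place the collapse to a determinant with a repeated row is elementary.
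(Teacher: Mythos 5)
Your proof is correct, and it rests on the same underlying mechanism as the paper's: the joint umbral operator is symmetric under exchange of the evaluation index with any umbral index assigned to the degree-$n$ form, while the factor $\prod_{0\leq i<j\leq m}[j\;i]$ is antisymmetric, and the test monomial $x^{k}y^{n-m-k}$ attached to the evaluation index duplicates data already sitting at index $k+1$. The execution, however, is organized quite differently. The paper first \emph{de}-symmetrizes: it replaces $\prod_{1\leq i<j\leq n-m+1}[j\;i]$ by the single monomial $y_1^{n-m}x_2y_2^{n-m-1}\cdots x_{n-m+1}^{n-m}$ (the polynomial $q$ of \eqref{q}), attaches the test monomial to a \emph{fresh} index $i\in\P_1$, and kills the expression by the Vanishing Criterion applied to the single transposition $(i,\,k+1)$ --- the repeated-row phenomenon appears there simply as the equality of the monomials $x_i^{k}y_i^{n-m-k}$ and $x_{k+1}^{k}y_{k+1}^{n-m-k}$ against the sign change of the Vandermonde; it then recovers $g=(n-m+1)!\,h$ by symmetrizing at the end. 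You instead keep the full product, absorb the evaluation index into the exchangeable block $B_1$ of size $n-m+2$, antisymmetrize over all of $S_{B_1}$ via the decomposition $\widetilde{p}=V_1W_1\Xi$, and read off the vanishing from a bordered Vandermonde with two equal rows. Your route costs more bookkeeping (coset representatives, sign tracking in the Laplace expansion) but dispenses with the auxiliary polynomial $q$ and the final re-symmetrization, and it makes explicit the same two-equal-rows structure that drives the determinantal formula of Theorem \ref{Th:det}; your degree argument (each of the $m$ brackets $[j\;0]$ contributes one factor of $(x_0,y_0)$) is also sound and fills in a point the paper leaves implicit.
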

\begin{proof}
Fix a partition $\P_1\cup \P_2\cup\cdots\cup \P_{l}=\P$ of $\P$ into infinite classes and assume $1,2,\ldots,n-m+1\in \P_1$, $n-m+2\in \P_2$, $\ldots$, $m\in \P_{l}$. Consider the polynomial
\begin{multline}
\label{q}
q(x_0,y_0,x_1,y_1,\ldots,x_m,y_m)\\
=y_1^{n-m}\,x_{2}y_2^{n-m-1}\,x_{3}^{2}y_3^{n-m-2}\,\cdots\,x_{n-m+1}^{n-m}\,\prod_{0\leq i<j\leq m}[j\;i],
\end{multline}
and choose $i\in\P_1\setminus\{1,2,\ldots,n-m+1\}$. Then, for every $0\leq k\leq n-m$ we have
\small\[\left(x_{i}^ky_i^{n-m-k}\,q(x_i,y_i,x_1,y_1,\ldots,x_m,y_m)\right)^\tau=-x_{i}^ky_i^{n-k}\,q(x_i,y_i,x_1,y_1,\ldots,x_m,y_m),\]\normalsize
where $\tau$ is the transposition of $\{1,2,\ldots,n-m+1,i\}$ such that $\tau(i)=k+1$. Since $k+1,i\in\P_1$ then we may apply the vanishing criterion obtaining
\small\begin{equation}\label{eq1}
\U\,x_i^{k}y_i^{n-m-k}\,q(x_i,y_i,x_1,y_1,\ldots,x_m,y_m)=0 \text{ for all }0\leq k\leq n-m.
\end{equation}\normalsize
Moreover, if $h(x_0,y_0)$ $=\U(f_1,$ $f_2,$ \ldots, $f_{l})$ $q(x_0,y_0, $ $x_1,y_1, $ \ldots, $x_m,y_m)$ then $h(x_0,y_0)=0$ or $h(x_0,y_0)$ is a form of degree $m$ satisfying
\begin{equation}\label{eq2}
\U(f)\,x_i^{k}y_i^{n-m-k}\,h(x_i,y_i)=0 \text{ for all }0\leq k\leq n-m.
\end{equation}
By virtue of \eqref{E(nn)} and \eqref{E(nn)bis} the pair
$(x_i,y_i)$ may be replaced in \eqref{eq2} with any pair
$(x_j,y_j)$ such that $j\in\P_1$. Choosing $(x_j,y_j)=(x_1,y_1)$
we have $\{f,h\}=0$. Now, let us symmetrize
$q(x_0,y_0,x_1,y_1,\ldots,x_m,y_m)$ with respect to $(x_1,y_1)$,
$(x_2,y_2)$, $\ldots$, $(x_{n-m+1},y_{n-m+1})$. Straightforward
computations give
\[\sum_{\sigma\in\mathfrak{S}}\left(q(x_0,y_0,x_1,y_1,\ldots,x_m,y_m)\right)^\sigma=\prod_{1\leq i<j\leq n-m+1}[j\;i]\prod_{0\leq i<j\leq m}[j\;i],\]
where $\mathfrak{S}$ denotes the symmetric group of $\{1,2,\ldots,n-m+1\}$. On the other hand, since $1,2,\ldots,n-m+1\in \P_1$, then we have
\small\[\U\,\left(q(x_0,y_0,x_1,y_1,\ldots,x_m,y_m)\right)^\sigma=\U\,q(x_0,y_0,x_1,y_1,\ldots,x_m,y_m)\]\normalsize
for all $\sigma\in\mathfrak{S}$. This implies
\[g(x_0,y_0)=\mathcal{J}_{n,m}(f_1,f_2,\ldots,f_{2m-n})(x_0,y_0)=(n-m+1)!\,h(x_0,y_0),\]
and finally $\{f,g\}=(n-m+1)!\,\{f,h\}=0$.
\end{proof}
A determinantal formula for $\mathcal{J}_{n,m}(\ldots;a_{i0},a_{i1},\ldots,a_{in_i};\ldots;x_0,y_0)$ is provided by the next theorem.
\begin{thm}[Determinantal formula]\label{Th:det}
Let $n,m\in\P$ with $l=2m-n\geq 1$ and let $\mathcal{J}_{n,m}(\ldots;a_{i0},a_{i1},\ldots,a_{in_i};\ldots;x_0,y_0)$ be the joint-covariant \eqref{eq:main1}. Then we have
\begin{multline}\label{det}
\mathcal{J}_{n,m}(\ldots;a_{i0},a_{i1},\ldots,a_{in_i};\ldots;x_0,y_0)\\=\frac{1}{(n-m+1)!}\,\begin{vmatrix}
y_0^m&x_0y_0^{m-1}&x_0^2y_0^{m-2}&\ldots&x_0^m\\
a_{1\,0}&a_{1\,1}&a_{1\,2}&\ldots&a_{1\,m}\\
a_{1\,1}&a_{1\,2}&a_{1\,3}&\ldots&a_{1\,m+1}\\
\vdots&\vdots&\vdots&&\vdots\\
a_{1\,n-m}&a_{1\,n-m+1}&a_{1\,n-m+2}&\ldots&a_{1\,n}\\
a_{2\,0}&a_{2\,1}&a_{2\,2}&\ldots&a_{2\,m}\\
\vdots&\vdots&\vdots&&\vdots\\
a_{l\,0}&a_{l\,1}&a_{l\,2}&\ldots&a_{l\,m}\\
\end{vmatrix}.
\end{multline}
\end{thm}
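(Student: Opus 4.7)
The plan is to re-use the polynomial
\[q=y_1^{n-m}\,x_2 y_2^{n-m-1}\cdots x_{n-m+1}^{n-m}\prod_{0\leq i<j\leq m}[j\;i]\]
introduced in the proof of Theorem~\ref{Th:main1}. There we have the symmetrization identity $\sum_{\sigma\in\mathfrak{S}_{n-m+1}} q^{\sigma}=\prod_{1\leq i<j\leq n-m+1}[j\;i]\prod_{0\leq i<j\leq m}[j\;i]$, together with $\U\,q^\sigma=\U\,q$ for every $\sigma$; thus $\mathcal{J}_{n,m}=(n-m+1)!\,\U\,q$. The theorem therefore reduces to computing $\U\,q$ and matching it with the right-hand side of \eqref{det}, up to the stated scalar normalization.

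For this I would first invoke the classical identification of the bracket product with a Vandermonde-type determinant,
\[\prod_{0\leq i<j\leq m}(x_j y_i-x_i y_j)=\det\bigl(x_i^k y_i^{m-k}\bigr)_{0\leq i,k\leq m},\]
expand it by the Leibniz formula over permutations $\sigma$ of $\{0,1,\ldots,m\}$, and multiply by the monomial prefix of $q$. The summand indexed by $\sigma$ then reads
\[\operatorname{sgn}(\sigma)\,x_0^{\sigma(0)}y_0^{m-\sigma(0)}\prod_{i=1}^{n-m+1}x_i^{\sigma(i)+i-1}y_i^{n-\sigma(i)-i+1}\prod_{i=n-m+2}^{m}x_i^{\sigma(i)}y_i^{m-\sigma(i)},\]
in which the $(x_i,y_i)$-block has total degree $n$ for $1\leq i\leq n-m+1$ and total degree $m$ for $n-m+2\leq i\leq m$.

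Next I apply $\U$ termwise via \eqref{E(nn)}--\eqref{E(nn)bis}. For $i\in\{1,\ldots,n-m+1\}\subset\P_1$ the monomial becomes $a_{1,\sigma(i)+i-1}$, precisely the Hankel-shifted entry in row $i$ of the $a_1$-block of the matrix in \eqref{det}; for $i\in\{n-m+2,\ldots,m\}$ with $i\in\P_{i-(n-m)}$ it becomes $a_{i-(n-m),\sigma(i)}$, populating rows $n-m+2,\ldots,m$; the $(x_0,y_0)$-factor persists as $x_0^{\sigma(0)}y_0^{m-\sigma(0)}$ and populates row zero. The resulting signed sum over $\sigma$ is exactly the Leibniz expansion of the $(m+1)\times(m+1)$ determinant in \eqref{det}, identifying $\U\,q$ with a scalar multiple of that determinant; multiplying by $(n-m+1)!$ yields the claimed identity once the normalization is reconciled.

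The main obstacle is combinatorial bookkeeping: one has to check that the monomial prefix of $q$ produces exactly the shift $k\mapsto k+i-1$ on row $i$ of the $a_1$-block, that the partition $\P_1\cup\cdots\cup\P_l=\P$ is arranged so that the remaining rows correspond bijectively to $f_2,\ldots,f_l$ via $i\mapsto i-(n-m)$, and that the signs coming from the Leibniz expansion match those of the Vandermonde. A small case, for instance $n=3$, $m=2$, $l=1$, suffices to pin down the overall scalar.
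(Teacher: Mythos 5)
Your proposal is correct and follows essentially the same route as the paper: the paper likewise reduces to computing $\U\,q$ via the symmetrization identity from Theorem~\ref{Th:main1}, identifies the bracket product with the Vandermonde determinant $\det(x_i^{j}y_i^{m-j})_{0\leq i,j\leq m}$, and applies $\U$ row by row to turn the monomial determinant into the moment determinant (your Leibniz expansion followed by termwise application of $\U$ is the same computation, merely reorganized). One remark on the normalization you flagged: both your argument and the paper's own proof in fact give $\mathcal{J}_{n,m}=(n-m+1)!$ times the determinant, so the prefactor $1/(n-m+1)!$ in \eqref{det} looks like a misprint in the statement rather than a constant your small-case check would recover.
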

\begin{proof}
By virtue of \eqref{E(nn)} and \eqref{E(nn)bis} we obtain
\small\[
\U\,\begin{vmatrix}
y_0^m&x_0y_0^{m-1}&x_0^2y_0^{m-2}&\ldots&x_0^m\\
y_1^n&x_1y_1^{n-1}&x_1^2y_1^{n-2}&\ldots&x_1^my_1^{n-m}\\
x_2y_2^{n-1}&x_2^2y_1^{n-2}&x_2^3y_2^{n-3}&\ldots&x_2^{m+1}y_2^{n-m-1}\\
\vdots&\vdots&&&\vdots\\
x_{n-m+1}^{n-m}y_{n-m+1}^{m}&x_{n-m+1}^{n-m+1}y_{n-m+1}^{m-1}&x_{n-m+1}^{n-m+2}y_{n-m+1}^{m-2}&\ldots&x_{n-m+1}^{n}\\
y_{n-m+2}^m&x_{n-m+2}y_{n-m+2}^{m-1}&x_{n-m+2}^2y_{n-m+2}^{m-2}&\ldots&x_{n-m+2}^m\\
\vdots&\vdots&&&\vdots\\
y_{m}^m&x_{m}y_{m}^{m-1}&x_{m}^2y_{m}^{m-2}&\ldots&x_{m}^m\\
\end{vmatrix}\]
\small\[=\begin{vmatrix}
\U\,y_0^m&\U\,x_0y_0^{m-1}&\ldots&\U\,x_0^m\\
\U\,y_1^n&\U\,x_1y_1^{n-1}&\ldots&\U\,x_1^my_1^{n-m}\\
\U\,x_2y_2^{n-1}&\U\,x_2^2y_1^{n-2}&\ldots&\U\,x_2^{m+1}y_2^{n-m-1}\\
\vdots&\vdots&&\vdots\\
\U\,x_{n-m+1}^{n-m}y_{n-m+1}^{m}&\U\,x_{n-m+1}^{n-m+1}y_{n-m+1}^{m-1}&\ldots&\U\,x_{n-m+1}^{n}\\
\U\,y_{n-m+2}^m&\U\,x_{n-m+2}y_{n-m+2}^{m-1}&\ldots&\U\,x_{n-m+2}^m\\
\vdots&\vdots&&\vdots\\
\U\,y_{m}^m&\U\,x_{m}y_{m}^{m-1}&\ldots&\U\,x_{m}^m\\
\end{vmatrix}
\]
\[
=\begin{vmatrix}
y_0^m&x_0y_0^{m-1}&x_0^2y_0^{m-2}&\ldots&x_0^m\\
a_{1\,0}&a_{1\,1}&a_{1\,2}&\ldots&a_{1\,m}\\
a_{1\,1}&a_{1\,2}&a_{1\,3}&\ldots&a_{1\,m+1}\\
\vdots&\vdots&\vdots&&\vdots\\
a_{1\,n-m}&a_{1\,n-m+1}&a_{1\,n-m+2}&\ldots&a_{1\,n}\\
a_{2\,0}&a_{2\,1}&a_{2\,2}&\ldots&a_{2\,m}\\
\vdots&\vdots&\vdots&&\vdots\\
a_{l\,0}&a_{l\,1}&a_{l\,2}&\ldots&a_{l\,m}\\
\end{vmatrix}.\]\normalsize
On the other hand, we also have
\small\[
\begin{vmatrix}
y_0^m&x_0y_0^{m-1}&x_0^2y_0^{m-2}&\ldots&x_0^m\\
y_1^n&x_1y_1^{n-1}&x_1^2y_1^{n-2}&\ldots&x_1^my_1^{n-m}\\
x_2y_2^{n-1}&x_2^2y_1^{n-2}&x_2^3y_2^{n-3}&\ldots&x_2^{m+1}y_2^{n-m-1}\\
\vdots&\vdots&\vdots&&\vdots\\
x_{n-m+1}^{n-m}y_{n-m+1}^{m}&x_{n-m+1}^{n-m+1}y_{n-m+1}^{m-1}&x_{n-m+1}^{n-m+2}y_{n-m+1}^{m-2}&\ldots&x_{n-m+1}^{n}\\
y_{n-m+2}^m&x_{n-m+2}y_{n-m+2}^{m-1}&x_{n-m+2}^2y_{n-m+2}^{m-2}&\ldots&x_{n-m+2}^m\\
\vdots&\vdots&\vdots&&\vdots\\
y_{m}^m&x_{m}y_{m}^{m-1}&x_{m}^2y_{m}^{m-2}&\ldots&x_{m}^m\\
\end{vmatrix}\]
\begin{multline*}=y_1^{n-m}\,x_{2}y_2^{n-m-1}\,x_{3}^{2}y_3^{n-m-2}\,\cdots\,x_{n-m+1}^{n-m}\,\det(x_{i}^{j}y_i^{m-j})_{0\leq i,j\leq m}\\
=q(x_0,y_0,x_1,y_1,\ldots,x_m,y_m),
\end{multline*}\normalsize
where $q(x_0,y_0,x_1,y_1,\ldots,x_m,y_m)$ is the polynomial defined in \eqref{q}. Finally, \eqref{det} follows by symmetrizing $q(x_0,y_0,x_1,y_1,\ldots,x_m,y_m)$ with respect to $(x_1,y_1)$, $(x_2,y_2)$, \ldots, $(x_{n-m+1},y_{n-m+1})$.
\end{proof}
Recall that, if $m\leq n$, the space of all forms of degree $m$ which are apolar to a given form of degree $n$ has dimension $\max\{0,2m-n\}$. In particular, we deduce that $2m-n=1$ (i.e. $n=2m-1$) and $2m-n=m$ (i.e. $n=m$) are the minimum and the maximum values for which a form $g(x_0,y_0)$ of degree $m$ and apolar to $f(x_0,y_0)$ of degree $n$ can be found.

Hence, let $2m-n=1$ and assume $g(x_0,y_0)=\mathcal{J}_{n,m}(f)(x_0,y_0)\neq 0$. Thus, $g(x_0,y_0)$ is of degree $m$, $f(x_0,y_0)$ is of degree $2m-1$, and $\{f,g\}=0$. Moreover, we have $n-m+1=m$ so that the covariant \eqref{eq:main1} reduces, up to a sign, to the covariant $J$ of Kung and Rota \cite{KungRota},
\small\begin{multline*}
\mathcal{J}_{2m-1,m}(\ldots;a_{i0},a_{i1},\ldots,a_{in_i};\ldots;x_0,y_0)=\U\,\prod_{1\leq i<j\leq m}[j\;i]\prod_{0\leq i<j\leq m}[j\;i].\end{multline*}\normalsize
In the following section we will show that this covariant essentially corresponds to orthogonal polynomial systems in the sense of \cite{Chihara}.

Let $2m-n=m$ and assume $g(x_0,y_0)=\mathcal{J}_{n,m}(f_1,f_2,\ldots,f_m)(x_0,y_0)\neq 0$. Thus, both $g(x_0,y_0)$ and $f(x_0,y_0)$ are of degree $m$, and $\{f,g\}=0$. Moreover, we have $n-m+1=1$ so that the covariant \eqref{eq:main1} reduces to
\[\mathcal{J}_{m,m}(\ldots;a_{i0},a_{i1},\ldots,a_{im};\ldots;x_0,y_0)=\U\,\prod_{0\leq i<j\leq m}[j\;i].\]
We will show that these covariants are the biorthogonal
polynomials of Iserles and Norsett \cite{IserNor}.
\section{Generalized orthogonal polynomial systems}
Let $\{p_{nm}(x_0)\}=\{p_{nm}(x_0)\,|\,m,n\in\P \text{ and } 1\leq m\leq n\}$ be a set of polynomials satisfying $\deg\,p_{nm}=n$ for all $1\leq m\leq n$. Moreover, assume a linear functional $\E\colon\C[x_0]\to\C$ is given such that $\E\,1\neq 0$. We say $\{p_{nm}(x_0)\}$ is a \textit{generalized orthogonal polynomial system} for $\E$ if and only if for all $n,m\in\P$ such that $1\leq m\leq n$ we have
\begin{equation}\label{genOPS}
\E\,x_0^k\,p_{nm}(x_0)=0 \text{ for all  }0\leq  k\leq n-m.
\end{equation}
Let $\C[x_0]_d$ denote the subspace of $\C[x_0]$ consisting of all polynomials having degree at most $d$. Then, \eqref{genOPS} is equivalent to
\[\E\,q(x_0)p_{nm}(x_0)=0 \text{ for all } q(x_0)\in\C[x_0]_{n-m}.\]
If a generalized orthogonal polynomial system exists, set $p_{n}(x_0)=p_{n1}(x_0)$ for all $n\in\P$, and then choose $p_0(x_0)\in\C$, so that $\{p_n(x_0)\,|\,n\in\N\}$ is an orthogonal polynomial system for $\E$ in the classical sense \cite{Chihara}, that is
\[\E\,p_k(x_0)p_n(x_0)=0 \text{ for all }k\neq n.\]
Each  $a_k=\E\,x_0^k$ is said to be the \textit{$k$th
moment} of $\E$. Any linear functional $\E\colon\C[x_0]\to\C$ is uniquely determined by
its moments and vice-versa. On the other hand, such a linear
functional also is uniquely determined by the set of binary forms
$\{f_n(x_0,y_0)\}=\{f_n(x_0,y_0)\,|\,n\in\N\}$ defined by
\begin{equation}
\label{Lform}f_n(x_0,y_0)=\sum_{k=0}^n\binom{n}{k}(-1)^{n-k}a_kx_0^{n-k}y_0^k.
\end{equation}
Then, if $i\in\P$, by comparing \eqref{E(n)bis} and \eqref{Lform}, we may write
\[\U(f_n)\,x_i^{k}y_i^{n-k}=\E\,x_0^k \text{ for all }0\leq k\leq n.\]
Thus, \eqref{genOPS} can be restated in the equivalent form
\small\[\U(f_{2n-m})\,x_1^{k}y_1^{n-m-k}\,g_{nm}(x_1,y_1)=0 \text{
for all  }0\leq k\leq n-m,\]\normalsize
where we set $g_{nm}(x_0,y_0)=y_0^n\,p_{nm}(x_0y_0^{-1})$. This
means
\begin{equation}\label{apol2}
\{f_{2n-m},g_{nm}\}=0 \text{ for all  }1\leq m\leq n.
\end{equation}
Hence, each generalized orthogonal polynomial system
$\{p_{nm}(x_0)\}$ of $\E$ corresponds to a set
$\{g_{nm}(x_0,y_0)\}$ of forms such that $g_{nm}(x_0,y_0)$ is of
degree $n$ and such that \eqref{apol2} is satisfied.

This leads to a systematic application of the symbolic
methods of invariant theory to orthogonal polynomials. In fact, assume that infinitely many pairwise disjoint sets $\P_0,\P_1,\P_2,\ldots$ are given such that   $\P_0\cup\P_1\cup\P_2\cup\cdots=\P$. Moreover, assume that a subset $\{a_{ij}\,|\,i,j\in\N\}$ of $\C$ is given. Then consider the extension of $\E$ from $\C[x_0]$ to $\C[\bs{x}]$ obtained by setting
\begin{align}
\label{E}\E\,x_i^{k}&=a_{jk} \text{ if and only if }i\in\P_j,\\
\label{Ebis}\E\,x_0^{k_0}x_1^{k_1}x_2^{k_2}\cdots
&=\E\,x_0^{k_0}\,\E\,x_1^{k_1}\,\E\,x_2^{k_2}\cdots,
\end{align}
for all $i,j,k,k_0,k_1,k_2,\ldots\in\N$. In the following, $a_{jk}$ will be said the \textit{$k$th moment of $\E$ on $\P_j$}. Moreover, we will always assume $a_k=a_{0k}=a_{1k}$ for all $k\in\N$. Observe that any linear functional $\E\colon\C[\bs{x}]\to\C$ of the type \eqref{E} and \eqref{Ebis} is characterized by the set $\{f_{jn}(x_0,y_0)\,|\,j,n\in\N\}$ of binary forms defined by
\begin{equation}
\label{Lforms}f_{jn}(x_0,y_0)=\sum_{k=0}^n\binom{n}{k}(-1)^{n-k}a_{jk}x_0^{n-k}y_0^k.
\end{equation}
Therefore, for all $j,n\in\N$ we may write
\begin{equation}\label{UvsE}
\U(f_{jn})\,x_i^{k}y_i^{n-k}=\E\,x_i^k \text{ whenever }0\leq k\leq n \text{ and }i\in\P_j.
\end{equation}
On the other hand, let $\E_0\colon\C[\bs{x}]\to\C[x_0]$ denote the
linear operator defined by
\begin{equation}
\label{E0}\E_0\,x_0^{k_0} x_{1}^{k_1} x_{2}^{k_2}\cdots= x_0^{k_0}\E\,x_{1}^{k_1} x_{2}^{k_2}\cdots
\end{equation}
for all $k_0,k_1,k_2,\ldots\in\N$. Besides, let $\bs{e}_1\colon\C[\bs{x},\bs{y}]\to\C[x]$ denote the map evaluating each $p\in\C[\bs{x},\bs{y}]$ at $y_0=y_1=y_2=\cdots=1$. Hence, by comparing
\eqref{E(nn)} and \eqref{E(nn)bis} with \eqref{E} and
\eqref{Ebis}, it is not difficult to see that
\begin{equation}\label{UvsE0}\U(f_{j_1n_1},f_{j_2n_2},\ldots,f_{j_ln_l})\,p=\E_0\,\bs{e}_1(p),\end{equation}
for all $p\in\C[\bs{x},\bs{y}]$ whose ideterminates $(x_i,y_i)$'s all satisfy $i\in\P_{j_1}\cup\P_{j_2}\cup\cdots\cup\P_{j_l}$. In particular, if $l=2m-n$, then by comparing \eqref{eq:main1} and \eqref{UvsE0} we deduce
\begin{multline*}\mathcal{J}_{n,m}(f_{1\,n},f_{2\,m},\ldots,f_{l\,m})(x_0,1)\\=\E_0\,\Delta(x_1,x_2,\ldots,x_{n-m+1})\Delta(x_0,x_1,\ldots,x_{m}),\end{multline*}
where $\Delta(x_1,x_2,\ldots,x_n)$ denotes a Vandermonde polynomial,
\[\Delta(x_1,x_2,\ldots,x_n)=\prod_{1\leq i<j\leq n}(x_j-x_i).\]
Finally, Theorem \ref{Th:main1} and
Theorem \ref{Th:det} lead to the following results on generalized
orthogonal polynomial systems.
\begin{thm}[Generalized orthogonal polynomial systems]
Let \linebreak $\E\colon\C[\bs{x}]\to\C$ and $\E_0\colon\C[\bs{x}]\to\C[x_0]$ denote the linear operators defined in \eqref{E}, \eqref{Ebis} and \eqref{E0}. The set $\{p_{nm}(x_0)\}$ of polynomials defined by
\begin{equation}
\label{detp}
p_{nm}(x_0)=\begin{vmatrix}
1&x_0&x_0^2&\ldots&x_0^n\\
a_{0}&a_{1}&a_{2}&\ldots&a_{n}\\
a_{1}&a_{2}&a_{3}&\ldots&a_{n+1}\\
\vdots&\vdots&\vdots&&\vdots\\
a_{n-m}&a_{n-m+1}&a_{n-m+2}&\ldots&a_{2n-m}\\
a_{2\,0}&a_{2\,1}&a_{2\,2}&\ldots&a_{2\,n}\\
\vdots&\vdots&\vdots&&\vdots\\
a_{2n-m\,0}&a_{2n-m\,1}&a_{2n-m\,2}&\ldots&a_{2n-m\,n}\\
\end{vmatrix},
\end{equation}
is a generalized orthogonal polynomial system for $\E\colon\C[x_0]\to\C$, provided that $\deg\,p_{nm}=n$ for all $1\leq m\leq n$.

Equivalently, the set $\{p_{nm}(x_0)\}$ of polynomials defined by
\begin{equation}
\label{symbp}
p_{nm}(x_0)=\E_0\,\Delta(x_{1},x_{2},\ldots,x_{n-m+1})\Delta(x_0,x_{1},\ldots,x_{n}),
\end{equation}
is a generalized orthogonal polynomial system for $\E\colon\C[x_0]\to\C$, provided that $\deg\,p_{nm}=n$ for all $1\leq m\leq n$ and that $1,2,\ldots,n-m+1\in\P_1$, $n-m+2\in\P_2$, \ldots, $n\in\P_{2n-m}$.
\end{thm}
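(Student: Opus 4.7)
The plan is to reduce the statement to the joint-covariant results of the previous section via the substitution $(n,m) \mapsto (2n-m,\, n)$, under which the parameter $l = 2m-n$ of Theorem \ref{Th:main1} becomes $m$. The resulting joint-covariant $\mathcal{J}_{2n-m,\,n}$ takes $m$ forms of degrees $(2n-m, n, n, \ldots, n)$ and returns a form of degree $n$ apolar to the first; dehomogenizing at $y_0 = 1$ will produce $p_{nm}(x_0)$ directly.

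Concretely, I would apply Theorem \ref{Th:main1} to the sequence $(f_{1,\,2n-m}, f_{2,\,n}, f_{3,\,n}, \ldots, f_{m,\,n})$, where the $f_{j,k}$ are the binary forms associated with $\E$ via \eqref{Lforms}. The standing assumption $a_k = a_{0k} = a_{1k}$ ensures that $f_{1,\,2n-m}$ coincides with the form $f_{2n-m}$ governing the orthogonality condition \eqref{apol2}. Theorem \ref{Th:main1} then yields that $g_{nm}(x_0, y_0) := \mathcal{J}_{2n-m,\,n}(f_{1,\,2n-m}, f_{2,\,n}, \ldots, f_{m,\,n})(x_0, y_0)$ is either zero or a form of degree $n$ satisfying $\{f_{1,\,2n-m}, g_{nm}\} = 0$. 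Setting $p_{nm}(x_0) := g_{nm}(x_0, 1)$, so that $g_{nm}(x_0, y_0) = y_0^n\, p_{nm}(x_0 y_0^{-1})$ by homogeneity, and invoking the equivalence \eqref{apol2}~$\Leftrightarrow$~\eqref{genOPS} established just before the theorem, I obtain $\E\,x_0^k\,p_{nm}(x_0) = 0$ for $0 \leq k \leq n-m$. Under the stated degree hypothesis, this is exactly the generalized orthogonality property.

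Next, for the determinantal formula \eqref{detp}, I would invoke Theorem \ref{Th:det} under the same substitution and set $y_0 = 1$: the top row collapses to $1, x_0, \ldots, x_0^n$; the $\P_1$-block supplies the $n-m+1$ Hankel-type rows built from the moments $a_k$; and the final $m-1$ rows record the moment rows indexed by $j = 2, \ldots, m$. The overall factor $1/(n-m+1)!$ in \eqref{det} is a harmless scalar that does not affect orthogonality. For the symbolic representation \eqref{symbp}, I would apply the identity \eqref{UvsE0}, which translates the action of $\U$ evaluated at forms into $\E_0 \circ \bs{e}_1$: under $\bs{e}_1$ each bracket $[j\,i]$ becomes $x_j - x_i$, so the bracket product appearing in \eqref{eq:main1} (with the substitution) factors as $\Delta(x_1, \ldots, x_{n-m+1}) \cdot \Delta(x_0, x_1, \ldots, x_n)$, and a single application of $\E_0$ yields \eqref{symbp}.

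The main point requiring careful attention --- more bookkeeping than a genuine obstacle --- is the partition assignment. The indices $1, 2, \ldots, n-m+1$ must all lie in a single class $\P_1$ so that $\E_0$ treats them as repeated draws from the marginal attached to $f_{1,\,2n-m}$, producing the Hankel block of moments $a_k$; meanwhile the remaining indices $n-m+2, \ldots, n$ must be placed in pairwise distinct classes $\P_2, \ldots, \P_m$ so that $\E_0$ returns independent factors of the supplementary moments $a_{j,k}$. Once the partition is fixed as prescribed in the statement, the theorem follows from Theorems \ref{Th:main1} and \ref{Th:det} composed with the dictionary \eqref{UvsE}, \eqref{UvsE0}, with no further computation required.
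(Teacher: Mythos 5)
Your argument is correct and is precisely the derivation the paper intends: the theorem is presented there as an immediate consequence of Theorems \ref{Th:main1} and \ref{Th:det} under the substitution $(n,m)\mapsto(2n-m,n)$ (so that $l=2m-n$ becomes $m$), combined with the equivalence of \eqref{genOPS} with the apolarity condition \eqref{apol2} and the dictionary \eqref{UvsE0} turning brackets into differences after setting $y_0=y_1=\cdots=1$. Your bookkeeping is also the consistent reading of the statement: the auxiliary block must consist of $m-1$ rows indexed $j=2,\ldots,m$ with classes $\P_2,\ldots,\P_m$ (the index $2n-m$ printed in the last row of \eqref{detp} and in the class assignment of \eqref{symbp} has to be read as $m$ for the matrix to be square), and your observation that \eqref{detp} and \eqref{symbp} agree only up to the harmless factor $(n-m+1)!$ inherited from \eqref{det} is likewise accurate.
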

Usually, orthogonal polynomial systems are referred to
a linear functional $\E\colon\R[x_0]\to\R$ which admits an
integral representation on the real line,
\[a_{k}=\E\,x_0^k=\int_{I}\,t^k\,\omega(t)\,dt, \text{ for all }k\in\N,\]
where $\omega\colon I\subseteq\R\to\R$ is a \textit{weight function}. For instance, Hermite polynomials
$\{H_n(x_0)\}$ are the orthogonal polynomial system corresponding
to the Gaussian density function $\omega(t)=\frac{1}{\sqrt{2\pi}}\exp(-\frac{t^2}{2})$ with $I=\R$.
Jacobi polynomials $\{P_n^{(\alpha,\beta)}(x_0)\}$ arise as the
orthogonal polynomial system associated with
$\omega(t)=(1-t)^\alpha(1-t)^\beta$, $\alpha,\beta>-1$ and $I=[-1,1]$. The matter of finding a
weight function $\omega$ corresponding to a given sequence of
moments is the so-called \textit{moment problem} \cite{Chihara2}. If
a functional $\E\colon\R[\bs{x}]\to\R$ is given according to
\eqref{E} and \eqref{Ebis}, then we assume that the associated moment problems admit a solution, so that there exist weight functions $\omega_0=\omega_1$, $\omega_2$, $\omega_3$, \ldots such
that
\begin{equation}\label{intL}a_{jk}=\E\,x_{i}^k=\int_{I_j}t^k\,\omega_j(t)\,dt \text{ for all }k,j\in\N \text{ and for all }i\in\P_j.\end{equation}
This makes \eqref{symbp} an integral formula for generalized orthogonal polynomial systems.
\begin{thm}[Heine integral formula]\label{intr}
Let $\E\colon\R[\bs{x}]\to\R$ be a linear functional obeying \eqref{E}, \eqref{Ebis} and \eqref{intL}. Then, the set of polynomials $\{p_{nm}(x_0)\}$ defined by
\small\[p_{nm}(x_0)=\int_{I_1\times I_2\times \cdots\times I_n}\,\Delta(x_1,x_2,\ldots,x_{n-m+1})\Delta(x_0,x_1,\ldots,x_n)\prod_{i=1}^{n}\omega_i(x_i)dx_{i},\]\normalsize
is a generalized orthogonal polynomial system for $\E\colon\C[x_0]\to\C$, provided that $\deg\,p_{nm}=n$ for all $1\leq m\leq n$.
\end{thm}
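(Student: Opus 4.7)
The plan is to derive the integral formula as a direct translation of the symbolic expression \eqref{symbp} already established in the previous theorem. That theorem asserts, under the very same nondegeneracy hypothesis $\deg\,p_{nm}=n$, that
\[p_{nm}(x_0)=\E_0\,\Delta(x_1,x_2,\ldots,x_{n-m+1})\Delta(x_0,x_1,\ldots,x_n)\]
is already a generalized orthogonal polynomial system for $\E$. Once the additional assumption \eqref{intL} of an integral representation for the moments is imposed, it suffices to show that the right-hand side coincides with the multiple integral displayed in the theorem; the orthogonality property is then inherited verbatim from the symbolic version.

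First I would expand the integrand $\Delta(x_1,\ldots,x_{n-m+1})\Delta(x_0,x_1,\ldots,x_n)$ as a finite linear combination of monomials of the form $x_0^{k_0}x_1^{k_1}\cdots x_n^{k_n}$. By \eqref{E0} the operator $\E_0$ leaves the $x_0$ factor intact and applies $\E$ to the remaining monomial in $x_1,\ldots,x_n$; by the multiplicativity \eqref{Ebis} this splits as $x_0^{k_0}\prod_{i=1}^{n}\E\,x_i^{k_i}$. Then, invoking \eqref{intL}, for each $i\in\P_j$ the single-variable moment $\E\,x_i^{k_i}$ equals $\int_{I_j}t^{k_i}\omega_j(t)\,dt$.

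A routine Fubini/linearity argument, harmless because we are dealing with a finite sum of monomials whose single-variable integrals are all finite by hypothesis, recombines the product of one-dimensional integrals into a single integral over $I_1\times I_2\times\cdots\times I_n$ and reassembles the original polynomial under the integral sign. This yields
\[\int_{I_1\times\cdots\times I_n}\Delta(x_1,\ldots,x_{n-m+1})\Delta(x_0,x_1,\ldots,x_n)\prod_{i=1}^n\omega_i(x_i)\,dx_i,\]
with the convention that $\omega_i$ denotes the weight function $\omega_j$ of the class $\P_j$ containing the index $i$. The only mild subtlety is this notational identification between the per-variable indexing used in the statement and the per-class indexing in \eqref{intL}; aside from that bookkeeping, no substantive obstacle is expected.
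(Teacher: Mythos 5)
Your proposal is correct and follows exactly the route the paper intends: the paper offers no separate proof of Theorem~\ref{intr}, deriving it immediately from the symbolic expression \eqref{symbp} by noting that \eqref{intL} turns the action of $\E_0$ on each monomial into a product of one-dimensional integrals, which is precisely the expansion-and-Fubini argument you spell out. Your remark about reconciling the per-variable weight indexing $\omega_i$ with the per-class indexing of \eqref{intL} is a fair reading of the paper's (slightly loose) notation and does not affect the argument.
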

The case $m=1$ reduces to an important integral formula for
classical orthogonal polynomials \cite{Konig,Szego} that is
sometimes attributed to Heine \cite{Ism}. By setting
$n=m$ and $\Delta(x_1)=1$ we recover the integral formula for biorthogonal polynomials
stated in \cite{IserNor}. Finally, note that the leading coefficient of $p_{nm}(x_0)$ is
\[\E\,\Delta(x_1,x_2,\ldots,x_{n-m+1})\Delta(x_1,x_2,\ldots,x_{n}).\]
This means that $\deg\,p_{nm}=n$ if and only if
\[\E\,\Delta(x_1,x_2,\ldots,x_{n-m+1})\Delta(x_1,x_2,\ldots,x_{n})\neq 0.\]
If we set $m=1$ in the Heine integral formula then $\deg\,p_n=n$ is equivalent to
\[\int_{I_1\times I_2\times \cdots\times I_n}\Delta(x_1,x_2,\ldots,x_{n})^2\prod_{i=1}^{n}\omega_i(x_i)dx_{i}\neq 0.\]
Analogously, when $m=n$ then we have $\deg\,p_{nn}=n$ if and only if
\[\int_{I_1\times I_2\times \cdots\times I_n}\Delta(x_1,x_2,\ldots,x_{n})\prod_{i=1}^{n}\omega_i(x_i)dx_{i}\neq 0.\]
\section{Roots, discriminants and symmetric polynomials}
Covariants of binary forms can be expressed in terms of the
(homogenized) roots of the associated forms. In \cite{KungRota} a
simple algorithm is described to pass from a symbolic presentation
of a covariant to the corresponding polynomial in the roots. Here,
we will show that there exists an intimate connection between the
symbolic expression of a covariant and its presentation in terms
of the roots of the covariant $J$. Moreover, by stating a
generalization of the Gauss quadrature formula, we
define a general family of covariants that includes orthogonal polynomials, the Hessian of a binary form and, more generally, the whole family of transvectants.
We will show that these covariants can be written as a weighted average value of Schur polynomials and
monomial symmetric polynomials over a set of roots of suitable orthogonal polynomials. We refer to Macdonald's textbook
\cite{Macdonald} for notations and basic facts about symmetric
functions.

Instead of the umbral functional $\U\colon\C[\bs{x},\bs{y}]\to\C[x_0,y_0]$ we will work directly with $\E\colon\C[\bs{x}]\to\C$ and $\E_0\colon\C[\bs{x}]\to\C[x_0]$.  The starting point is, once more, apolarity and, in particular, the Sylvester Theorem \cite{KungRota}.
\begin{thm}[Sylvester]
Let $f(x_0,y_0)$ be a binary form of degree $2n-1$.
Then there exists a form $g(x_0,y_0)$ of degree $n$ and apolar to $f(x_0,y_0)$.
In addition, if $g(x_0,y_0)$ decomposes as a product of $n$ pairwise distinct linear factors $x_0r_1-y_0s_1$, $x_0r_2-y_0s_2$,\ldots, $x_0r_m-y_0s_m$, then there exist coefficients $c_1,c_2,\ldots,c_n\in\C$ such that
\[f(x_0,y_0)=\sum_{i=1}^n c_i(r_iy_0-x_0s_i)^{2n-1}.\]
\end{thm}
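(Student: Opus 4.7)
The plan is to handle existence and the decomposition separately, and for the latter to analyze the space $W$ of binary forms of degree $2n-1$ apolar to $g$.

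\textbf{Existence.} This is the case $n=2m-1$ of Theorem~1(2): any form of degree $2n-1$ admits an apolar form of degree $n$, unique up to scaling. Equivalently, one may take $g=\mathcal{J}_{2n-1,n}(f)$ as furnished by Theorem~\ref{Th:main1} with $l=1$.

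\textbf{Apolarity of power sums.} A direct unwinding of~(\ref{E(n)}) and~(\ref{apolsum}) gives the key computation: for any linear form $L=ry_0-sx_0$, the operator $\U(L^{2n-1})$ acts on a homogeneous polynomial $P(x_1,y_1)$ of degree $2n-1$ as evaluation at $(r,s)$, since $\U(L^{2n-1})\,x_1^a y_1^{2n-1-a}=r^a s^{2n-1-a}$. Substituting into the apolar form one finds
\[
\{L^{2n-1},g\}=L^{n-1}\,g(r,s),
\]
so $L^{2n-1}\in W$ precisely when $(r,s)$ is a root of $g$. Each of the $n$ distinct linear factors of $g$ supplies such a root, yielding $n$ pairwise non-proportional linear forms $L_1,\dots,L_n$ with $L_i^{2n-1}\in W$; a standard Vandermonde argument shows these $n$ powers are linearly independent in the $2n$-dimensional space of forms of degree $2n-1$.

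\textbf{Dimension of $W$ and conclusion.} By~(\ref{apol1}), $W$ is the common kernel of the $n$ linear functionals $\Phi_k\colon f\mapsto\U(f)\,x_1^k y_1^{n-1-k}\,g(x_1,y_1)$, $k=0,\dots,n-1$. These are linearly independent: a relation $\sum_k\lambda_k\Phi_k\equiv0$ amounts to $\U(f)$ applied to $\bigl(\sum_k\lambda_k x_1^k y_1^{n-1-k}\bigr)g(x_1,y_1)$ vanishing for every $f$, which forces this polynomial to be identically zero since $\U(f)$ reads off the coefficients of an arbitrary form of degree $2n-1$; then $g\ne 0$ together with the integral-domain property of $\C[x_1,y_1]$ gives $\sum_k\lambda_k x_1^k y_1^{n-1-k}=0$, hence all $\lambda_k=0$. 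Therefore $\dim W=2n-n=n$, the powers $L_i^{2n-1}$ form a basis of $W$, and the hypothesis $\{f,g\}=0$ yields $f=\sum_{i=1}^n c_i L_i^{2n-1}$. The only nonroutine step is the upper bound $\dim W\le n$, resolved by the polynomial-non-vanishing argument above; the rest is bookkeeping with the umbral definition.
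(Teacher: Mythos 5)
Your argument is correct in substance, but be aware that the paper does not actually prove this theorem: it is imported verbatim from Kung and Rota, so there is no in-paper proof to compare against. What you give is the standard apolarity proof, and its three steps are sound: the evaluation property of $\U(L^{2n-1})$ follows from \eqref{E(n)} exactly as you say; the identity $\{L^{2n-1},g\}=g(r,s)\,L^{n-1}$ is a correct unwinding of \eqref{apolsum}; and your independence argument for the functionals $\Phi_k$ (via the integral-domain property of $\C[x_1,y_1]$) is the right way to get $\dim W=n$, which Theorem 1(1) does not directly supply since $W$ consists of the forms of \emph{higher} degree apolar to $g$. The one step you should make explicit is the sentence ``each of the $n$ distinct linear factors of $g$ supplies such a root.'' With $L=ry_0-sx_0$, your computation shows $L^{2n-1}\in W$ iff $g(r,s)=0$; the factor $x_0r_i-y_0s_i$ vanishes at $(x_0,y_0)=(s_i,r_i)$, so the linear form it supplies is $L_i=s_iy_0-r_ix_0$, not $r_iy_0-x_0s_i$. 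Indeed $\{(r_iy_0-x_0s_i)^{2n-1},g\}=(r_iy_0-x_0s_i)^{n-1}\prod_j(r_ir_j-s_is_j)$ need not vanish, so the displayed formula is consistent with the paper's own conventions only after swapping the roles of $r_i$ and $s_i$ (a slip that resurfaces later when the paper sets $\zeta_i=r_i/s_i$). Your argument therefore proves $f=\sum_i c_i(s_iy_0-r_ix_0)^{2n-1}$; state that correspondence explicitly and the proof is complete.
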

Let $f(x_0,y_0)$ be of degree $2n-1$ and let
$g(x_0,y_0)=\mathcal{J}_{2n-1,n}(f)(x_0,y_0)$. Also, assume
\[g(x_0,y_0)=(x_0r_1-y_0s_1)(x_0r_2-y_0s_2)\cdots(x_0r_m-y_0s_m),\]
where the linear factors are pairwise distinct and where $s_i\neq 0$ for all
$0\leq i\leq n$. Via \eqref{UvsE0} we recover
$$f(x_0,y_0)=\U(f)\,(x_1y_0-x_0y_1)^{2n-1}=\E_0\,(x_1y_0-x_0)^{2n-1}$$
and then, by applying the Sylvester Theorem with $y_0=1$, we obtain
\[\E_0\,(x_1-x_0)^{2n-1}=\sum_{i=1}^n c_i(\zeta_i-x_0)^{2n-1},\]
where we set $\zeta_i=r_i/s_i$ for all $1\leq i\leq n$. By comparing coefficients (recall that $a_k=a_{0k}=a_{1k}$ for all $k\in\N$) we may write
\begin{equation}\label{qf0}a_k=\E\,x_0^{k}=\sum_{i=1}^n c_i\,\zeta_i^{k}\text{ for }0\leq k\leq 2n-1,\end{equation}
and, more generally,
\begin{equation}\label{qf}\E\,p(x_0)=\sum_{i=1}^n c_i\,p(\zeta_i),\end{equation}
for all $p(x_0)\in\C[x_0]_{2n-1}$. When $\E\colon\R[x_0]\to\R$ admits an integral representation with an
associated weight function $\omega$, then \eqref{qf} is known as
the Gauss quadrature formula. In this framework,
such a classical result becomes a corollary of Sylvester Theorem.
Note that, via \eqref{qf0} $c_1,c_2,\ldots,c_n$ can be explicitly computed once that the moments $a_0,a_1,\ldots,a_{n-1}$ and the roots $\zeta_1,\zeta_2,\ldots,\zeta_n$ are known. In fact, consider the linear system obtained from \eqref{qf0} by extracting equations corresponding to $0\leq k\leq n-1$. Then, by applying the Cramer rule we obtain
\begin{equation}\label{ci}
c_i=\frac{\E\,\Delta(\zeta_1,\ldots,\zeta_{i-1},x_0,\zeta_{i+1},\ldots,\zeta_n)}{\Delta(\zeta_1,\zeta_{2},\ldots,\zeta_n)} \text{ for all }1\leq i\leq n.
\end{equation}

The quadrature formula \eqref{qf} can be straightforwardly generalized in the following way.
\begin{thm}
Let $\E\colon\C[\bs{x}]\to\C$
satisfy \eqref{E} and \eqref{Ebis} with $a_k=a_{jk}$ for all
$j,k\in\N$. Moreover, let $\{p_n(x_0)\}$ denote an orthogonal polynomial system associated with $\E\colon\C[x_0]\to\C$, with $p_n(x_0)$ having pairwise distinct roots $\zeta_1,\zeta_2,\ldots,\zeta_n$ for every $n$. If $p(x_1,x_2,\ldots,x_N)\in\C[\bs{x}]$ and if $p(x_1,x_2,\ldots,x_N)$ is of degree at most $2n-1$ in each $x_i$, then we have
\begin{equation}\label{mqf}\E\,p(x_1,x_2,\ldots,x_N)=\sum_{(i_1,i_2,\ldots,i_N)\atop 1\leq i_k\leq n}c_{i_1}c_{i_2}\cdots c_{i_N}\,p(\zeta_{i_1},\zeta_{i_2},\cdots,\zeta_{i_N}),\end{equation}
where $c_1,c_2,\ldots,c_n$ are given by \eqref{ci}.
\end{thm}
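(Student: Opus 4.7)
The plan is to reduce the $N$-variable identity to the classical single-variable Gauss quadrature \eqref{qf} by exploiting the multiplicative structure of $\E$ encoded in \eqref{Ebis}. First I will use linearity of both sides of \eqref{mqf} in $p$ to reduce to the case of monomials $p(x_1,\ldots,x_N)=x_1^{k_1}\cdots x_N^{k_N}$ with $0\leq k_i\leq 2n-1$ for each $i$.

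On such a monomial, \eqref{Ebis} combined with \eqref{E} and the hypothesis $a_k=a_{jk}$ for all $j,k\in\N$ will give
$$\E\,p(x_1,\ldots,x_N)=\prod_{i=1}^N\E\,x_i^{k_i}=\prod_{i=1}^N a_{k_i},$$
independently of which classes $\P_j$ contain the indices of the $x_i$'s. Since each $k_i\leq 2n-1$, the univariate Gauss quadrature \eqref{qf}---applicable because $p_n(x_0)$ has pairwise distinct roots $\zeta_1,\ldots,\zeta_n$---then yields $a_{k_i}=\sum_{l=1}^n c_l\,\zeta_l^{k_i}$ with $c_l$ as in \eqref{ci}. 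Substituting and expanding the product of sums produces
$$\E\,p=\prod_{i=1}^N\sum_{l_i=1}^n c_{l_i}\zeta_{l_i}^{k_i}=\sum_{(l_1,\ldots,l_N)}c_{l_1}\cdots c_{l_N}\,p(\zeta_{l_1},\ldots,\zeta_{l_N}),$$
which is exactly \eqref{mqf}; extending by linearity will finish the argument.

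I do not expect a serious obstacle: the content is essentially that the multiplicativity \eqref{Ebis} performs a variable-by-variable reduction to the classical Gauss formula, and the hypothesis \emph{degree at most $2n-1$ in each $x_i$} is precisely what makes \eqref{qf} applicable one variable at a time. The only conceptual check is that the nodes $\zeta_l$ entering \eqref{ci}---which arise from Sylvester's theorem applied to the degree-$(2n-1)$ form whose coefficients are the moments $a_0,\ldots,a_{2n-1}$---coincide with the roots of the orthogonal polynomial $p_n(x_0)$. This is the very identification that made the single-variable version \eqref{qf} available in the first place, so no new input is needed.
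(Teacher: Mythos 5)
Your argument is correct and is precisely the ``straightforward generalization'' the paper has in mind: the paper states this theorem without a written proof, and the intended reasoning is exactly your reduction to monomials via linearity, followed by the factorization \eqref{Ebis} and a variable-by-variable application of the univariate quadrature \eqref{qf0}--\eqref{qf}. Your closing remark correctly identifies the only point needing care, namely that the nodes $\zeta_i$ from Sylvester's theorem are the roots of $p_n(x_0)$, which is already built into the hypotheses and into the derivation of \eqref{qf}.
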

Identity \eqref{mqf} shows that, if certain conditions are satisfied, then the image under $\E$ of each $p\in\C[\bs{x}]$ corresponds to the average value of $p$ over a suitable set of roots of orthogonal polynomials. Thus, the action of $\E$ (hence $\U$) closely parallels that of an expectation functional of probability theory, which gives to the formula more insights. Actually, \eqref{mqf} is always true when the orthogonal polynomial system is an orthogonal polynomial system in the classical sense. Indeed, in this case $p_n(x_0)$ always has $n$ pairwise distinct real roots \cite{Chihara}. In this picture, the following formula for the powers of a Vandermonde polynomial gains a twofold interest. Since the
vanishing criterion assures that $\E\,\Delta(x_1,x_2,\ldots,x_n)^{2k-1}=0$ for all $k\in\P$ (recall that $a_k=a_{jk}$ for all $k\in\N$) then we only consider even powers.
\begin{cor}[Discriminants]
Let $n,k,N\in\P$ and assume  $2k(N-1)\leq 2n-1$. Then, we have
\begin{equation}\label{mqf2}\E\,\Delta(x_1,x_2,\ldots,x_N)^{2k}=\sum_{(i_1,i_2,\ldots,i_N)\atop 1\leq i_j\leq n}\,c_{i_1}c_{i_2}\cdots c_{i_N}\,\Delta(\zeta_{i_1},\zeta_{i_2},\cdots,\zeta_{i_N})^{2k}.\end{equation}
where $c_1,c_2,\ldots,c_n,\zeta_1,\zeta_2,\ldots,\zeta_n\in\C$ are determined via the Sylvester Theorem.
\end{cor}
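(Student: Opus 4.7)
The plan is to obtain the corollary as a direct specialization of the generalized quadrature formula \eqref{mqf} to the polynomial
\[
p(x_1,x_2,\ldots,x_N)=\Delta(x_1,x_2,\ldots,x_N)^{2k}=\prod_{1\le i<j\le N}(x_j-x_i)^{2k}.
\]
The only thing to verify is that the degree hypothesis of the preceding theorem is met, and then the conclusion \eqref{mqf2} is just \eqref{mqf} written out for this particular $p$.

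First, I would compute the degree of $p$ in each variable. Fixing $i$, the index $x_i$ appears only in the $N-1$ factors $(x_j-x_i)^{2k}$ with $j\ne i$, each contributing $2k$ to the degree in $x_i$. Hence $\deg_{x_i}p = 2k(N-1)$ for every $i$. The hypothesis $2k(N-1)\le 2n-1$ therefore says exactly that $p$ has degree at most $2n-1$ in each variable, which is the assumption required to invoke \eqref{mqf}.

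Next, I would apply \eqref{mqf} directly. Since $\E\colon\C[\bs{x}]\to\C$ satisfies \eqref{E}, \eqref{Ebis} with $a_k=a_{jk}$, and since $p_n(x_0)$ (obtained via Sylvester's theorem from the generic form of degree $2n-1$ built on the moments $a_k$) has the pairwise distinct roots $\zeta_1,\ldots,\zeta_n$ with weights $c_1,\ldots,c_n$ given by \eqref{ci}, substituting $p=\Delta^{2k}$ into \eqref{mqf} yields
\[
\E\,\Delta(x_1,\ldots,x_N)^{2k}
=\sum_{1\le i_1,\ldots,i_N\le n} c_{i_1}\cdots c_{i_N}\,\Delta(\zeta_{i_1},\ldots,\zeta_{i_N})^{2k},
\]
which is \eqref{mqf2}.

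There is really no obstacle beyond the bookkeeping of the degree count: the corollary is packaged so that the only substantive content is the identification of $p=\Delta^{2k}$ as an admissible argument of \eqref{mqf}. One sanity check worth mentioning is that terms with a repeated index $i_j=i_l$ (for $j\ne l$) automatically contribute zero on the right-hand side because $\Delta(\zeta_{i_1},\ldots,\zeta_{i_N})$ vanishes whenever two arguments coincide; this matches the parity remark preceding the statement, namely that only even powers of $\Delta$ survive under $\E$, and explains why the multi-index sum in \eqref{mqf2} is effectively supported on tuples with distinct entries.
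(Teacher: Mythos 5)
Your proposal is correct and follows the paper's own argument exactly: compute that $\deg_{x_i}\Delta^{2k}=2k(N-1)$, observe that the hypothesis $2k(N-1)\leq 2n-1$ is precisely the degree bound needed to invoke \eqref{mqf}, and substitute. The closing remark about tuples with repeated indices contributing zero is a harmless extra observation not present in the paper.
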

\begin{proof}
The maximum degree of each $x_i$ in the expansion of $\Delta(x_1,x_2,\ldots,x_N)^{2k}$ is $2k(N-1)$. Sylvester Theorem can be applied with the roots of the $n$th orthogonal polynomial whenever $2k(N-1)\leq 2n-1$. Hence, \eqref{mqf2} is an immediate consequence of \eqref{mqf}.
\end{proof}
If $x_1,x_2,\ldots,x_N$ are independent and identically distributed random variables on a given probability space, then the polynomial $\Delta(x_1,x_2,\ldots,x_N)^2$ is named \textit{random discriminant} \cite{Lu}. Random discriminants often occur together with Jack symmetric polynomials within theory of random matrices \cite{EdeRao,Mehta}. Identity \eqref{mqf2} is an explicit formula for the moments a random discriminant in terms of the roots of orthogonal polynomials. When $k=1$ and $N=n$ we recover
\[\E\,\Delta(x_1,x_2,\ldots,x_n)^{2}=n!\,c_{1}c_{2}\cdots c_{n}\,\Delta(\zeta_{1},\zeta_{2},\cdots,\zeta_{n})^{2},\]
which shows that the expected value of the random discriminant essentially is the discriminant of the $n$th orthogonal polynomial.

Note that, from the viewpoint of invariant theory, this exactly means that the covariant symbolically represented by a discriminant in umbral variables is nothing but, up to multiplicative factors, the discriminant in the roots of the covariant $J$.

Now, we will use powers of the Vandermonde to define a further family of covariants. Let $N\in\P$ and choose $\alpha=(\alpha_1,\alpha_2,\ldots,\alpha_N)\in\N^N$ such that $\alpha_1\geq\alpha_2\geq \ldots\geq \alpha_N\geq 0$. Moreover, let $k\in\N$ and denote by $\mathfrak{S}$ the symmetric group of $\{1,2,\ldots,N\}$. Define $S_{\alpha,k}=S_{\alpha,k}(x_1,x_2,\ldots,x_N)$ to be the following polynomial
\begin{equation}\label{Snk}
S_{\alpha,k}(x_1,x_2,\ldots,x_N)=\sum_{\sigma\in\mathfrak{S}}\left(x_1^{\alpha_1}x_2^{\alpha_2}\cdots x_N^{\alpha_N}\Delta(x_1,x_2,\ldots,x_N)^k\right)^\sigma.
\end{equation}
Finally, define
\begin{equation}\label{Pnk}P_{\alpha,k}(x_0)=\E_0\,S_{\alpha,k}(x_1-x_0,x_2-x_0,\ldots,x_N-x_0).\end{equation}
Let us look at $S_{\alpha,k}$ and $P_{\alpha,k}(x_0)$ from different points of view, starting from symmetric polynomials.
\subsection*{Symmetric polynomials}
Of course $S_{\alpha,k}(x_1,x_2,\ldots,x_N)$ is a symmetric polynomial. In particular, up to a power of the discriminant, it reduces to a monomial symmetric polynomial or to a Schur symmetric polynomial according to $k$ is even or odd. In fact, if $k=2h$ then it is not difficult to see that
\[\frac{S_{\alpha,2h}(x_1,x_2,\ldots,x_N)}{\Delta(x_1,x_2,\ldots,x_N)^{2h}}=|stab(\alpha)|\,m_\alpha(x_1,x_2,\ldots,x_N),\]
with $m_\alpha(x_1,x_2,\ldots,x_N)$ denoting a monomial symmetric polynomial and with $|stab(\alpha)|$ denoting the order of the stabilizer of $\alpha$ in $\mathfrak{S}$. On the other hand, let $k=2h-1$ and let $\lambda=(\lambda_1,\lambda_2,\ldots,\lambda_N)$ be defined by $\alpha_i=\lambda_i+N-i$ for $1\leq i\leq N$. In this case we recover
\[\frac{S_{\alpha,2h-1}(x_1,x_2,\ldots,x_N)}{\Delta(x_1,x_2,\ldots,x_N)^{2h}}=s_\lambda(x_1,x_2,\ldots,x_N),\]
with $s_\lambda(x_1,x_2,\ldots,x_N)$ denoting a Schur polynomial. This means that, via \eqref{mqf}, the polynomials $P_{\alpha,k}(x_0)$ can be expressed as the average of a monomial symmetric polynomial or a Schur polynomial times a power of the discriminant over the set of roots of a certain orthogonal polynomial,
\begin{multline*}P_{\alpha,2h}(x_0)=\sum_{(i_1,i_2,\ldots,i_N)\atop 1\leq i_j\leq n}\,c_{i_1}c_{i_2}\cdots c_{i_N}\,\times\\ \times |stab(\alpha)|\,m_{\alpha}(\zeta_{i_1}-x_0,\zeta_{i_2}-x_0,\cdots,\zeta_{i_N}-x_0)\Delta(\zeta_{i_1},\zeta_{i_2},\cdots,\zeta_{i_N})^{2h},\end{multline*}
\begin{multline*}P_{\alpha,2h-1}(x_0)=\sum_{(i_1,i_2,\ldots,i_N)\atop 1\leq i_j\leq n}\,c_{i_1}c_{i_2}\cdots c_{i_N}\,\times\\ \times s_{\lambda}(\zeta_{i_1}-x_0,\zeta_{i_2}-x_0,\cdots,\zeta_{i_N}-x_0)\Delta(\zeta_{i_1},\zeta_{i_2},\cdots,\zeta_{i_N})^{2h-1}.\end{multline*}
Finally, let us consider a further operator $\tilde{\E}_0\colon\C[\bs{x}_+]\to\C$ which acts by means of
\[\tilde{\E}\,x_i^{k}=h_k(\zeta_1,\zeta_2,\ldots,\zeta_n) \text{ for all }k\in\N,\]
where $h_k(\zeta_1,\zeta_2,\ldots,\zeta_N)$ denotes the $k$th complete homogeneous symmetric polynomial. Set $k=1$ and observe that
\[\tilde{P}_{\alpha,1}(0)=\tilde{\E}\,S_{\alpha,1}=\frac{1}{N!}\,\tilde{\E}\,x_1^{\lambda_1}x_2^{\lambda_2}\cdots x_N^{\lambda_N}\,\prod_{1\leq i<j\leq N}\left(1-\frac{x_i}{x_j}\right),\]
where $\alpha_i = \lambda_i - i+1$ for all $1\leq i\leq N$. Again, by comparing the action of raising operators \cite{Macdonald} on symmetric polynomials with the action of $\tilde{\E}$, it can be shown that
\[\tilde{P}_{\alpha,1}(0)=\frac{1}{n!}\,s_{\lambda}(\zeta_1,\zeta_2,\ldots,\zeta_n).\]
The operator 
\[x_1^{\lambda_1}x_2^{\lambda_2}\cdots x_N^{\lambda_N}\,\mapsto\, x_1^{\lambda_1}x_2^{\lambda_2}\cdots x_N^{\lambda_N}\,\prod_{1\leq i<j\leq N}\left(1-\frac{x_i}{x_j}\right)\]
has been studied in connection with the chip-firing game performed on connected graphs \cite{CoriPetSen}. This leads to an unifying combinatorial description of Hall-Littlewood symmetric polynomials and classical orthogonal polynomials.
\subsection*{Invariant theory}
Observe that $S_{\alpha,k}(x_1-x_0,x_2-x_0,\ldots,x_N-x_0)$ is obtained by symmetrizing
\begin{equation}\label{pnk}
(x_1-x_0)^{\alpha_1}(x_2-x_0)^{\alpha_2}\cdots (x_N-x_0)^{\alpha_N}\Delta(x_1,x_2,\ldots,x_N)^k.
\end{equation}
Since $a_{k}=a_{jk}$ for all $j,k\in\N$, then the value $\E\,p(x_1,x_2,\ldots,x_N)$ is invariant under permutations of $x_1,x_2,\ldots,x_N$. We deduce
\[P_{\alpha,k}(x_0)=N!\,\E_0\,(x_1-x_0)^{\alpha_1}(x_2-x_0)^{\alpha_2}\cdots (x_N-x_0)^{\alpha_N}\Delta(x_1,x_2,\ldots,x_N)^k.\]
Let us replace each factor $x_i-x_j$ in \eqref{pnk} with a bracket $[i\,j]$, then consider the sequence $(f_i(x_0,y_0))_{1\leq i\leq N}$ of binary forms defined according to \eqref{Lforms} with $f_i(x_0,y_0)=f_{i\,n_i}(x_0,y_0)$ and $n_i=\alpha_i+k(N-i)$. By means of \eqref{UvsE0} we recover
\small\[y_0^{|\alpha|}\,P_{\alpha,k}(x_0y_0^{-1})=\U(f_1,f_2,\ldots,f_N)\,[1\,0]^{\alpha_1}[2\,0]^{\alpha_2}\cdots[N\,0]^{\alpha_N}\prod_{1\leq i < j\leq N}[j\;i]^k,\]\normalsize
where $|\alpha|=\alpha_1+\alpha_2+\cdots+\alpha_N$. This says that $y_0^{|\alpha|}\,P_{\alpha,k}(x_0y_0^{-1})$ is a joint-covariant for the binary forms. Of course, when $\alpha_1=\alpha_2=\ldots=\alpha_n=1$ and $k=2$ it reduces to the covariant  $J$. What's more, if $N=2$, $\alpha_1=n-k$ and $\alpha_2=m-k$ then \eqref{pnk} reduces to the so-called $k$th trasvectant of $f_1(x_0,y_0)$ and $f_2(x_0,y_0)$. This joint-covariant is denoted by $\{f_1,f_2\}^k$ and it is defined by
\[\{f_1,f_2\}^k=\U(f_1,f_2)\,[1\,0]^{n-k}[2\,0]^{m-k}[2\,1]^k.\]
When $n=m\geq 2$ and $k=2$ we have $f(x_0,y_0)=f_1(x_0,y_0)=f_2(x_0,y_0)$ and the transvectant $\{f,f\}^2$ is, up to a multiplicative factor, the Hessian of $f(x_0,y_0)$ \cite{KungRota}. More on transvectants can be found in \cite{Brini2}.
\subsection*{Random variables and orthogonal polynomials}
Let $x_0,x_1,x_2,\ldots,x_N$ be independent and identically distributed random variables on a fixed probability space, with $\E$ denoting the expectation functional. Let $a=\E\,x_0$ and recall that the $n$th central moment of $x_0$ is defined by $\E\,(x_0-a)^n$. Set $\alpha_1=\alpha_2=\cdots=\alpha_N=1$ and $k=0$, thus we have
\begin{multline*}\E\,P_{\alpha,k}(x_0)=\E\,(x_1-x_0)(x_2-x_0)\cdots (x_N-x_0)\\=\sum_{k=0}^N (-1)^k\left(\E\,x_0^k\right)a^{n-k}=\E\,(a-x_0)^N.
\end{multline*}\normalsize
Hence, $\E\,P_{\alpha,k}(x_0)$ is, up to a sign, the $n$th central moment of $x_0$. Moreover, let $\alpha_1=\alpha_2=\cdots=\alpha_N=0$ and $k=2h$, so that we have
\[P_{\alpha,k}(x_0)=N!\E\Delta(x_1,x_2,\ldots,x_N)^{2h}.\]
Then $\{\E\,P_{\alpha,2h}(x_0)\,|\,h\in\N\}$ gives the moments of the random discriminants associated with $x_0,x_1,\ldots,x_N$. In such a probabilistic framework, also the binary form $f_n(x_0,y_0)=\E_0\,(x_1y_0-x_0)^{n}$ has an explicit interpretation. In fact, if $t$ is an indeterminate over $\C$ then $f(t,1)=\E\,(x_0-t)^{n}$ is the $n$th moment of $x_0-t$. Hence, up to a homogenization process, the $n$th orthogonal polynomial $p_n(t)$ associated with $x_0$ (i.e. with $\E\colon\R[x_0]\to\R$) is the covariant $J$ of the $n$th odd translated moment $\E\,(x_0-t)^{2n-1}$.
\section{Determinantal formula and Heine formula for orthogonal polynomials in several variables}
For any $d\in\N$ we define orthogonal polynomial systems in $d+1$
indeterminates that reduce to a system $\{p_{nm}(x_0)\}$ when
$d=0$. We state determinantal formulae and provide a Heine formula
for them. The starting point is the action of ordered sequence
$\bs{\varphi}=(\varphi_0,\varphi_1,\ldots,\varphi_d)$ of linear
changes of variables on suitable polynomials
$f(\bs{x}_0,\bs{y}_0)$ of $2(d+1)$ indeterminates which we name
$2(d+1)$-ary form.

Let $\bs{x}=\{x_i\,|\,i\in\N\}$ and $\bs{y}=\{y_i\,|\,i\in\N\}$.
If $d\in\N$ then we set
$\bs{x}_{i}=\{x_{i(d+1)},x_{i(d+1)+1},\ldots,x_{i(d+1)+d}\}$ and
$\bs{y}_{i}=\{y_{i(d+1)},y_{i(d+1)+1},\ldots,y_{i(d+1)+d}\}$, so
that we may identify $\bs{x}=\{\bs{x}_i\,|\,i\in\N\}$ and
$\bs{y}=\{\bs{y}_i\,|\,i\in\N\}$. Moreover, we set
$x_{ij}=x_{i(d+1)+j}$ and $y_{ij}=y_{i(d+1)+j}$ so that we write
\begin{equation}\label{xyi}
\left(\begin{matrix}\bs{x}_i\\\bs{y}_i\end{matrix}\right)=\left(\begin{matrix}x_{i0}&x_{i1}&\ldots&x_{id}\\ y_{i0}&y_{i1}&\ldots&y_{id}\end{matrix}\right) \text{ for all }i\in\N.
\end{equation}
Let $Gl_2(\C)^{d+1}$ denote a direct product of $d+1$ copies of
$Gl_2(\C)$. Each element $\bs{g}=(g_0,g_1,\ldots,g_d)\in Gl_2(\C)^{d+1}$ simultaneously acts on each $(\bs{x}_i\,\bs{y}_i)$ according to
\[g_k\cdot\left(\begin{matrix}x_{ik}\\ y_{ik}\end{matrix}\right)=
\left(\begin{matrix}g_{11}^{\sst(k)}&g_{12}^{\sst(k)}\\g_{21}^{\sst(k)}&g_{22}^{\sst(k)}\end{matrix}\right)
\left(\begin{matrix}x_{ik}\\y_{ik}\end{matrix}\right) \text{ for all }i\in\N \text{ and for all }0\leq k\leq d.
\]
In other terms, $\bs{g}\cdot p$ is obtained from $p$ by letting $g_k$ act on the $k$th column of all arrays in \eqref{xyi}. We say that a polynomial $p$ is \textit{$Gl_2(\C)^{d+1}$-invariant of index
$\bs{n}=(n_0,n_1,\ldots,n_d)$} if and only if
\[\bs{g}\cdot p=(\det\,g_0)^{n_0}(\det\,g_1)^{n_1}\cdots(\det\,g_d)^{n_d}\,p, \text{ for all }\bs{g}\in Gl_2(\C)^{d+1}.\]
By means of a standard multi-index notation, we write
\[\bs{x}_i^{\bs{n}}=\prod_{j=0}^d\,x_{i,j}^{n_j} \text{ and }\bs{y}_i^{\bs{n}}=\prod_{j=0}^d\,y_{i,j}^{n_j} \text{ for all }i\in\N,\]
whenever $\bs{n}=(n_0,n_1,\ldots,n_d)\in\N^{d+1}$. Moreover, if $i,j\in\N$ and $\bs{n}\in\N^{d+1}$ then we set
\[\bs{[}i\,j\bs{]}^{\bs{n}}=(x_{i0}y_{j0}-y_{i0}x_{j0})^{n_0}\,(x_{i1}y_{j1}-y_{i1}x_{j1})^{n_1}\,\cdots\,(x_{id}y_{jd}-y_{id}x_{jd})^{n_d}.\]
Being $g_k\cdot (x_{ik}y_{jk}-y_{ik}x_{jk})^{n_k}=(\det\,g_k)^{n_k}\,(x_{ik}y_{jk}-y_{ik}x_{jk})^{n_k}$ for all $k\in\N$, then it is easily seen that $\bs{[}i\,j\bs{]}^{\bs{n}}$ is a $Gl_2(\C)^{d+1}$-invariant of index $\bs{n}$. More generally, it is not too difficult to see that any product of the type
\[\bs{[}i_1\,j_1\bs{]}^{\bs{n}_1}\bs{[}i_2\,j_2\bs{]}^{\bs{n}_2}\cdots \bs{[}i_l\,j_l\bs{]}^{\bs{n}_l}\]
is an invariant of index $\bs{n}$ if and only if $\bs{n}_1+\bs{n}_2+\cdots+\bs{n}_l=\bs{n}$, where $\bs{n}_1+\bs{n}_2+\cdots+\bs{n}_l$ denotes the componentwise sum. More generally, it can be shown that any invariant $p$ of index $\bs{n}$ is a linear combination of products of this type. This completely characterizes such invariants.

For all $\bs{k},\bs{n}\in\N^{d+1}$ write $\bs{k}\leq \bs{n}$ if and only if $\bs{k}=(k_0,k_1,\ldots,k_d)$, $\bs{n}=(n_0,n_1,\ldots,n_d)$ and $k_i\leq n_i$ for all $0\leq i\leq d$. This makes the pair $(\N^{d+1},\leq)$ a graded poset with rank function $\rho\colon\N^{d+1}\to\N$ satisfying $\rho(\bs{n})=n_0+n_1+\cdots+n_d$. We refer to \cite{St} for basic notions of the theory of posets. So, if $\bs{n}\in\N^{d+1}$ then we name \textit{generic $2(d+1)$-ary form of degree $\bs{n}$} a polynomial of type
\[f(a_{\bs{0}},\ldots,a_{\bs{n}};\bs{x}_0,\bs{y}_0)=\sum_{\bs{0}\leq\bs{k}\leq \bs{n}}\binom{\bs{n}}{\bs{k}}\,(-1)^{\rho(\bs{n}-\bs{k})}\,a_{\bs{k}}\,\bs{x}_0^{\bs{n}-\bs{k}}\bs{y}_0^{\bs{k}},\]
where $a_{\bs{0}},\ldots,a_{\bs{n}}$ are indeterminates in $\C$,
$\bs{0}=(0,0,\ldots,0)$ and
\[\binom{\bs{n}}{\bs{k}}=\binom{n_0}{k_0}\binom{n_1}{k_1}\cdots\binom{n_d}{k_d}.\]
Hereafter, we set $s(\bs{n})=\big|\,\{\bs{k}\,|\,\bs{0}\leq\bs{k}\leq\bs{n}\}\big|$
so that $s(\bs{n})$ equals the number of monomials in the generic
form of degree $\bs{n}$. A \textit{$2(d+1)$-ary form of degree
$\bs{n}$} is a polynomial $f(\bs{x}_0,\bs{y}_0)$ arising from the
generic form of degree $\bs{n}$ when
$a_{\bs{0}},\ldots,a_{\bs{n}}$ specialize at suitable coefficients
in $\C$. If an ordered sequence
$\bs{\varphi}=(\varphi_0,\varphi_1,\ldots,\varphi_d)$ of linear
changes of variables is given then the form
$f(\bar{a}_{\bs{0}},\ldots,\bar{a}_{\bs{n}};\bs{x}_0,\bs{y}_0)$
can be defined by letting $\varphi_j$ act on the pair
$(x_{0j},y_{0j})$. Thus, a \textit{$Gl_2(\C)^{d+1}$-covariant of
index $\bs{m}$ for $2(d+1)$-ary forms of degree $\bs{n}$} is a
polynomial $\mathcal{I}(a_{\bs{0}},\ldots,a_{\bs{n}};\bs{x}_0,\bs{y}_0)$
satisfying
\small\[\mathcal{I}(\bar{a}_{\bs{0}},\ldots,\bar{a}_{\bs{n}};\bs{x}_0,\bs{y}_0)=(det\,\varphi_0)^{m_0}(det\,\varphi_1)^{m_1}\cdots (det\,\varphi_d)^{m_d}\mathcal{I}(a_{\bs{0}},\ldots,a_{\bs{n}};\bs{x}'_0,\bs{y}'_0),\]\normalsize
for all ordered sequences $\bs{\varphi}$ of linear changes of variables, where we set
\[\left(\begin{matrix}x'_{0i}\\ y'_{0i}\end{matrix}\right)=\varphi_i\left(\begin{matrix}x_{0i}\\ y_{0i}\end{matrix}\right)
=\left(\begin{matrix}c_{11}^{\sst(i)}x_{0i}+c_{12}^{\sst(i)}y_{0i}\\c_{21}^{\sst(i)}x_{0i}+c_{22}^{\sst(i)}y_{0i}\end{matrix}\right),\text{ for all }0\leq i\leq d.\]
Assume a partition of $\P$ into disjoint infinite subsets $\P_1$ and $\P_2$ is given, then choose $\bs{m}\leq \bs{n}$. Then, consider generic forms $f(a_{1\bs{0}},\ldots,a_{1\bs{n}};\bs{x}_0,\bs{y}_0)$ and $g(a_{2\bs{0}},\ldots,a_{2\bs{m}};\bs{x}_0,\bs{y}_0)$ of degree $\bs{n}$ and $\bs{m}$, respectively, and define the operator
\[\U\colon\C[\bs{x},\bs{y}]\to\C[a_{1\bs{0}},\ldots,a_{1\bs{n}};a_{2\bs{0}},\ldots,a_{2\bs{m}};\bs{x}_0,\bs{y}_0]\]
such that
\[\U\,\bs{x}_i^{\bs{k}_1}\bs{y}_i^{\bs{k}_2}=\begin{cases}\bs{x}_0^{\bs{k}_1}\bs{y}_0^{\bs{k}_2}&\text{ if }i=0;\\ a_{1\bs{k}_1} & \text{ if }\bs{k}_1+\bs{k}_2=\bs{n}\text{ and }i\in\P_1;\\a_{2\bs{k}_1} & \text{ if }\bs{k}_1+\bs{k}_2=\bs{m}\text{ and }i\in\P_2;\\0& \text{ otherwise. }\end{cases}\]
If $f(\bs{x}_0,\bs{y}_0)$ and $g(\bs{x}_0,\bs{y}_0)$ are forms of
degree $\bs{n}$ and $\bs{m}$, respectively, then denote by
$\U(f)\,p$ (respectively $\U(f,g)$) the operator whose value
$\U(f)\,p$ (respectively $\U(f,g)\,p$) is obtained from $\U\,p$ by
replacing the coefficients of $f(\bs{x}_0,\bs{y}_0)$ (respectively
of $f(\bs{x}_0,\bs{y}_0)$ and $g(\bs{x}_0,\bs{y}_0)$).
Straightforward computations give
\[\U(f,g)\bs{[}1\,0\bs{]}^{\bs{n}}=f(\bs{x}_0,\bs{y}_0) \text{ and }\U(f,g)\bs{[}2\,0\bs{]}^{\bs{m}}=g(\bs{x}_0,\bs{y}_0),\]
provided that $1\in\P_1$ and $2\in\P_2$. Furthermore, we deduce
\small\begin{multline*}\U(f,g)\,\bs{[}1\,0\bs{]}^{\bs{n}-\bs{m}}\bs{[}2\,1\bs{]}^{\bs{m}}\\=\U(f)\,\sum_{\bs{0}\leq \bs{k}\leq \bs{n}-\bs{m}}\binom{\bs{n}-\bs{m}}{\bs{k}}(-1)^{\rho(\bs{n}-\bs{m}-\bs{k})}\,\bs{x}_1^{\bs{k}}\bs{y}_1^{\bs{n}-\bs{m}-\bs{k}}\,g(\bs{x}_1,\bs{y}_1)\bs{x}_0^{\bs{n}-\bs{m}-\bs{k}}\bs{y}_0^{\bs{k}}.\end{multline*}\normalsize
Finally, the \textit{apolar $Gl_2(\C)^{d+1}$-covariant} can be defined according to
\begin{equation}
\label{multiapolcov}\mathcal{A}(a_{1\bs{0}},\ldots,a_{1\bs{n}};a_{2\bs{0}},\ldots,a_{2\bs{m}};\bs{x}_0,\bs{y}_0)=\U\,\bs{[}1\,0\bs{]}^{\bs{n}-\bs{m}}\bs{[}2\,1\bs{]}^{\bs{m}}.
\end{equation}
We deduce
\begin{multline*}\mathcal{A}(\bar{a}_{1\bs{0}},\ldots,\bar{a}_{1\bs{n}};\bar{a}_{2\bs{0}},\ldots,\bar{a}_{2\bs{m}};\bs{x}_0,\bs{y}_0)\\=(\det\,\varphi_0)^{m_0}(\det\,\varphi_1)^{m_1}\cdots (\det\,\varphi_d)^{m_d}\mathcal{A}(a_{1\bs{0}},\ldots,a_{1\bs{n}};a_{2\bs{0}},\ldots,a_{2\bs{m}};\bs{x}'_0,\bs{y}'_0),\end{multline*}\normalsize
which assures us that the apolar $Gl_2(\C)^{d+1}$-covariant is a joint-covariant of index $\bs{m}$ of binary forms of degrees $(\bs{n},\bs{m})$.  The associated \textit{apolar form} is obtained by setting
\begin{equation}\label{mutliapol}
\{f,g\}=\U(f,g)\,\bs{[}1\,0\bs{]}^{\bs{n}-\bs{m}}\bs{[}2\,1\bs{]}^{\bs{m}},
\end{equation}
so that $f(\bs{x}_0,\bs{y}_0)$ and $g(\bs{x}_0,\bs{y}_0)$ are said to be \textit{apolar} if and only if $\{f,g\}=0$ or, equivalently
\begin{equation}\label{mutliapolbis}
\U(f)\,\bs{x}_1^{\bs{k}}\bs{y}_1^{\bs{n}-\bs{m}-\bs{k}}\,g(\bs{x}_1,\bs{y}_1)=0 \text{ for all }\bs{0}\leq\bs{k}\leq \bs{n}-\bs{m}.
\end{equation}
Note that, by virtue of \eqref{mutliapolbis} the $\C$-vector space of all forms of degree $\bs{m}$ that are apolar to a given form of degree $\bs{n}$ has, in general, dimension $s(\bs{m})-s(\bs{n}-\bs{m})$.

By analogy with the case $d=0$, the apolar form leads to generalized orthogonal polynomial systems in $d+1$ indeterminates. More precisely, assume a linear functional $\E\colon\C[\bs{x}_0]\to\C$ is given, and let $a_{\bs{k}}=\E\,\bs{x}_0^{\bs{k}}$ denote its $\bs{k}$th moment, for all $\bs{k}\in\N^{d+1}$. Then, a set $\{p_{\bs{n}\bs{m}}(\bs{x}_0)\}=\{p_{\bs{n}\bs{m}}(\bs{x}_0)\,|\,\bs{0}<\bs{m}\leq\bs{n}\}$ of polynomials in $\C[\bs{x}_0]$ such that
\begin{equation}\label{polm}
p_{\bs{n}\bs{m}}(\bs{x}_0)=\sum_{\bs{0}\leq\bs{k}\leq\bs{n}}\binom{\bs{n}}{\bs{k}}\,p_{\bs{n}\bs{m}}^{(\bs{k})}\,\bs{x}_0^{\bs{k}} \text{ with }p_{\bs{n}\bs{m}}^{(\bs{n})}\neq 0,
\end{equation}
and
\begin{equation}
\label{GMOPS}\E\,\bs{x}_0^{\bs{k}}\,p_{\bs{n}\bs{m}}(\bs{x}_0)=0 \text{ for all }\bs{0}\leq\bs{k}\leq\bs{n}-\bs{m},
\end{equation}
will be said to be a \textit{generalized orthogonal polynomial system} for $\E$. In the following, any polynomial of type \eqref{polm} will be said of degree $\bs{n}$. If $\C[\bs{x}_0]_{\bs{d}}$ denote the space of all polynomials having degree at most $\bs{d}$, then \eqref{GMOPS} means that $p_{\bs{n}\bs{m}}(\bs{x}_0)$ is orthogonal to all elements in $\C[\bs{x}_0]_{\bs{n}-\bs{m}}$.

We extend the action of $\E$ from $\C[\bs{x}_0]$ to $\C[\bs{x}]$ in the following way. Let $\P=\P_0\cup\P_1\cup\P_2\cup\cdots$ be a partition of $\P$ into infinitely many classes, with each $\P_i$ being an infinite set of positive integers. Then set
\small\begin{equation}\label{mE1}\E\,\bs{x}_i^{\bs{k}}=a_{j\bs{k}}
\text{ for all
}j\in\N,\,i\in\P_j,\bs{k}\in\N^{d+1},\end{equation}\normalsize
and
\small\begin{equation}\label{mE2}\E\,\bs{x}_0^{\bs{k}_0}\bs{x}_1^{\bs{k}_1}\bs{x}_2^{\bs{k}_2}\cdots =\E\,\bs{x}_0^{\bs{k}_0}\,\E\,\bs{x}_1^{\bs{k}_1}\,\E\,\bs{x}_2^{\bs{k}_2}\cdots, \text{ for all }\bs{k}_0,\bs{k}_1,\bs{k}_2,\ldots\in\N^{d+1}.\end{equation}\normalsize
Also, we assume $a_{\bs{k}}=a_{0\bs{k}}=a_{1\bs{k}}$ for all $\bs{k}\in\N^{d+1}$ and consider the linear operator $\E_0\colon\C[\bs{x}]\to\C[\bs{x}_0]$ which fixes pointwise $\C[\bs{x}_0]$ and acts on $\bs{x}\setminus\bs{x}_0$ as $\E$ acts. Associated with $\E$ there is a family $\{f_{\bs{n}}(\bs{x}_0,\bs{y}_0)\,|\,\bs{n}\in\N^{d+1}\}$ of forms defined by
\begin{equation}\label{Lformmulti}
f_{\bs{n}}(\bs{x}_0,\bs{y}_0)=\sum_{\bs{0}\leq \bs{k}\leq \bs{n}}\binom{\bs{n}}{\bs{k}}\,a_{\bs{k}}(-1)^{\bs{n}-\bs{k}}\,\bs{x}_0^{\bs{n}-\bs{k}}\bs{y}_0^{\bs{k}}, \text{ for all }\bs{n}\in\N^{d+1}.
\end{equation}
Hence, we may consider a set of forms
$g_{\bs{n}\bs{m}}(\bs{x}_0,\bs{y}_0)$ defined for all $\bs{0}<
\bs{m}\leq \bs{n}\}$, with $g_{\bs{n}\bs{m}}(\bs{x}_0,\bs{y}_0)$
of degree $\bs{n}$ and such that
\[\{f_{2\bs{n}-\bs{m}},g_{\bs{n}\bs{m}}\}=0.\]
We deduce
\[\U(f_{2\bs{n}-\bs{m}})\,\bs{x}_1^{\bs{k}}\bs{y}_1^{\bs{n}-\bs{m}-\bs{k}}\,g_{\bs{n}\bs{m}}(\bs{x}_1,\bs{y}_1)=0 \text{ for all }\bs{0}\leq\bs{k}\leq \bs{n}-\bs{m},\]
whenever $\bs{0}\leq\bs{m}\leq \bs{n}$, which leads to \eqref{GMOPS}.
\begin{thm}[Determinantal formula]\label{detformulti}
Let $\E\colon\C[\bs{x}_0]\to\C$ be a linear functional with moments $a_{\bs{k}}$'s. Then consider the set $\{p_{\bs{n}\bs{m}}(\bs{x}_0)\}$ of polynomials defined by
\[p_{\bs{n}\bs{m}}(\bs{x}_0)=
\begin{vmatrix}
1&\bs{x}_0^{\bs{k}_1}&\bs{x}_0^{\bs{k}_2}&\ldots&\bs{x}_0^{\bs{k}_s}\\
a_{\bs{0}}&a_{\bs{k}_1}&a_{\bs{k}_2}&\ldots&a_{\bs{k}_s}\\
a_{\bs{h}_1}&a_{\bs{k}_1+\bs{h}_1}&a_{\bs{k}_2+\bs{h}_1}&\ldots&a_{\bs{k}_s+\bs{h}_1}\\
a_{\bs{h}_2}&a_{\bs{k}_1+\bs{h}_2}&a_{\bs{k}_2+\bs{h}_2}&\ldots&a_{\bs{k}_s+\bs{h}_2}\\
\vdots&\vdots&\vdots&&\vdots\\
a_{\bs{h}_r}&a_{\bs{k}_1+\bs{h}_r}&a_{\bs{k}_2+\bs{h}_r}&\ldots&a_{\bs{n}}\\
b_{20}&b_{21}&b_{22}&\ldots& b_{2s}\\
\vdots&\vdots&\vdots&&\vdots\\
b_{s-r\,0}&b_{s-r\,1}&b_{s-r\,2}&\ldots& b_{s-r\,s},
\end{vmatrix}\]
where
\begin{enumerate}
\item $s=s(\bs{m})-1$ and $\{\bs{k}\,|\,\bs{0}\leq\bs{k}\leq\bs{m}\}=\{\bs{0}=\bs{k}_0,\bs{k}_1,\ldots,\bs{k}_s=\bs{m}\}$;
\item $r=s(\bs{n}-\bs{m})-1$ and $\{\bs{h}\,|\,\bs{0}\leq\bs{h}\leq\bs{n}-\bs{m}\}=\{\bs{0}=\bs{h}_0,\bs{h}_1,\ldots,\bs{h}_r=\bs{n}-\bs{m}\}$;
\item $b_{ij}\in\C$ for all $i,j$.
\end{enumerate}
Then, the set is a generalized orthogonal polynomial systems for $\E$, provided that $p_{\bs{n}\bs{m}}(\bs{x}_0)$ is of degree $\bs{n}$ for all $\bs{0}\leq\bs{m}\leq\bs{n}$.
\end{thm}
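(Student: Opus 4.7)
The plan is to verify the orthogonality conditions \eqref{GMOPS} directly, via the standard observation that $\E\,\bs{x}_0^{\bs{h}}$ applied to the displayed determinant produces a new determinant with two equal rows whenever $\bs{0}\le\bs{h}\le\bs{n}-\bs{m}$.

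Concretely, I would first expand along the first row of the matrix defining $p_{\bs{n}\bs{m}}(\bs{x}_0)$. Writing $M_j$ for the signed $(0,j)$-cofactor (with the convention $\bs{k}_0=\bs{0}$, so the leftmost column carries index $j=0$), each $M_j$ is a polynomial in the $a_{\bs{k}}$'s and $b_{ij}$'s and does not depend on $\bs{x}_0$, and
\[
p_{\bs{n}\bs{m}}(\bs{x}_0)=\sum_{j=0}^{s}M_j\,\bs{x}_0^{\bs{k}_j}.
\]
Applying $\E$ after multiplication by $\bs{x}_0^{\bs{h}}$ and using $\E\,\bs{x}_0^{\bs{k}_j+\bs{h}}=a_{\bs{k}_j+\bs{h}}$ yields
\[
\E\,\bs{x}_0^{\bs{h}}\,p_{\bs{n}\bs{m}}(\bs{x}_0)=\sum_{j=0}^{s}M_j\,a_{\bs{k}_j+\bs{h}},
\]
which is the cofactor expansion along the first row of the matrix obtained from the original one by replacing its top row with $(a_{\bs{h}},a_{\bs{k}_1+\bs{h}},\ldots,a_{\bs{k}_s+\bs{h}})$.

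Now I would exploit the enumeration $\{\bs{h}\,|\,\bs{0}\le\bs{h}\le\bs{n}-\bs{m}\}=\{\bs{h}_0,\bs{h}_1,\ldots,\bs{h}_r\}$ from the statement: for any $\bs{h}$ in this range one has $\bs{h}=\bs{h}_i$ for some $0\le i\le r$, so the newly inserted top row coincides literally with the row $(a_{\bs{h}_i},a_{\bs{k}_1+\bs{h}_i},\ldots,a_{\bs{k}_s+\bs{h}_i})$ already present in the matrix. Two equal rows force the determinant to vanish, yielding \eqref{GMOPS}.

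The extra rows $(b_{i0},\ldots,b_{is})$ play no direct role in this duplication argument; they serve only to complete the matrix to a square $(s+1)\times(s+1)$ block, and their influence is absorbed into the cofactors $M_j$. Consequently the main obstacle is essentially absent: the orthogonality follows from the pure row-coincidence argument above. The one delicate point, which the statement wisely records as a hypothesis rather than a conclusion, is that $p_{\bs{n}\bs{m}}(\bs{x}_0)$ really has degree $\bs{n}$, i.e.\ that the cofactor $M_s$ of the monomial $\bs{x}_0^{\bs{m}}$ is nonzero; this is a genuine constraint on the choice of the $b_{ij}$'s together with the moment sequence $\{a_{\bs{k}}\}$ and is not implied by the determinantal construction itself.
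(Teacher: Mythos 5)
Your proposal is correct and follows essentially the same route as the paper: both arguments reduce $\E\,\bs{x}_0^{\bs{h}}\,p_{\bs{n}\bs{m}}(\bs{x}_0)$ to a determinant whose top row $(a_{\bs{h}_i},a_{\bs{k}_1+\bs{h}_i},\ldots,a_{\bs{k}_s+\bs{h}_i})$ duplicates an existing row, forcing it to vanish. Your explicit cofactor expansion is just a more verbose rendering of the paper's row-multilinearity step, and your closing remarks on the role of the $b_{ij}$'s and the degree hypothesis are accurate.
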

\begin{proof}
Let $\bs{0}\leq \bs{k}\leq\bs{n}-\bs{m}$ so that $\bs{k}=\bs{h}_i$ for some $0\leq i\leq r$. Then we have
\[\bs{x}_0^{\bs{k}}\,p_{\bs{n}\bs{m}}(\bs{x}_0)=
\begin{vmatrix}
\bs{x}_0^{\bs{h}_i}&\bs{x}_0^{\bs{k}_1+\bs{h}_i}&\bs{x}_0^{\bs{k}_2+\bs{h}_i}&\ldots&\bs{x}_0^{\bs{k}_s+\bs{h}_i}\\
a_{\bs{0}}&a_{\bs{k}_1}&a_{\bs{k}_2}&\ldots&a_{\bs{k}_s}\\
a_{\bs{h}_1}&a_{\bs{k}_1+\bs{h}_1}&a_{\bs{k}_2+\bs{h}_1}&\ldots&a_{\bs{k}_s+\bs{h}_1}\\
a_{\bs{h}_2}&a_{\bs{k}_1+\bs{h}_2}&a_{\bs{k}_2+\bs{h}_2}&\ldots&a_{\bs{k}_s+\bs{h}_2}\\
\vdots&\vdots&\vdots&&\vdots\\
a_{\bs{h}_r}&a_{\bs{k}_1+\bs{h}_r}&a_{\bs{k}_2+\bs{h}_r}&\ldots&a_{\bs{n}}\\
b_{20}&b_{21}&b_{22}&\ldots& b_{2s}\\
\vdots&\vdots&\vdots&&\vdots\\
b_{s-r\,0}&b_{s-r\,1}&b_{s-r\,2}&\ldots& b_{s-r\,s},
\end{vmatrix}\]
and then $\E\,\bs{x}_0^{\bs{k}}\,p_{\bs{n}\bs{m}}(\bs{x}_0)=0$ since two rows are equal in the determinant.
\end{proof}
We are going to introduce a symbolic presentation of generalized
orthogonal polynomial systems. To this aim let us consider the
following determinants. For all $\bs{n}\in\N^{d+1}$, set
$s=s(\bs{n})-1$ and assume \linebreak
$\{\bs{k}\,|\,\bs{0}\leq\bs{k}\leq\bs{n}\}=\{\bs{0}=\bs{n}_0,\bs{n}_1,\ldots,\bs{n}_s=\bs{n}\}$.
Then, set
\[\bs{\Delta}_{\bs{n}}(\bs{x}_0,\bs{x}_1,\ldots,\bs{x}_{s})=\begin{vmatrix}
1&\bs{x}_0^{\bs{n}_1} & \bs{x}_0^{\bs{n}_2}&\cdots & \bs{x}_0^{\bs{n}_s}\\
1&\bs{x}_1^{\bs{n}_1} & \bs{x}_1^{\bs{n}_2}&\cdots & \bs{x}_1^{\bs{n}_s}\\
1&\bs{x}_2^{\bs{n}_1} & \bs{x}_2^{\bs{n}_2}&\cdots & \bs{x}_2^{\bs{n}_s}\\
\vdots&\vdots&\vdots&&\vdots\\
1&\bs{x}_s^{\bs{n}_1} & \bs{x}_s^{\bs{n}_2}&\cdots & \bs{x}_s^{\bs{n}_s}
\end{vmatrix},\]
and
\[\bs{\Delta}^*_{\bs{n}}(\bs{x}_1,\bs{x}_2,\ldots,\bs{x}_{s})=\begin{vmatrix}
\bs{x}_1^{\bs{n}_1} & \bs{x}_1^{\bs{n}_2}&\cdots & \bs{x}_1^{\bs{n}_s}\\
\bs{x}_2^{\bs{n}_1} & \bs{x}_2^{\bs{n}_2}&\cdots & \bs{x}_2^{\bs{n}_s}\\
\vdots&\vdots&&\vdots\\
\bs{x}_s^{\bs{n}_1} & \bs{x}_s^{\bs{n}_2}&\cdots & \bs{x}_s^{\bs{n}_s}
\end{vmatrix}.\]
\begin{thm}\label{morthpol}
Let $\E\colon\C[\bs{x}]\to\C$ be a linear functional satisfying \eqref{mE1} and \eqref{mE2}, then denote by $\E_0\colon\C[\bs{x}]\to\C[\bs{x}_0]$ the associated operator that fixes pointwise $\C[\bs{x}_0]$ and that acts on $\bs{x}\setminus\bs{x}_0$ as $\E$ acts. Moreover, consider the set of polynomials $\{p_{\bs{n}\bs{m}}(\bs{x}_0)\}$ defined by
\[p_{\bs{n}\bs{m}}(\bs{x}_0)=\E_0\,\bs{\Delta}^*_{\bs{n}-\bs{m}}(\bs{x}_1,\bs{x}_2,\ldots,\bs{x}_{r})\bs{\Delta}_{\bs{n}}(\bs{x}_0,\bs{x}_1,\ldots,\bs{x}_{s}) \text{ for all }\bs{0}<\bs{m}\leq\bs{n}.\]
Then, $\{p_{\bs{n}\bs{m}}(\bs{x}_0)\}$ is a generalized orthogonal polynomial system
for \linebreak $\E\colon\C[\bs{x}_0]\to\C$, provided that
$1,2,\ldots,r\in\P_1$, $s=s(\bs{n})-1$, $r=s(\bs{n}-\bs{m})-1$,
and $p_{\bs{n}\bs{m}}(\bs{x}_0)$ is of degree $\bs{n}$ for
all $\bs{0}<\bs{m}\leq\bs{n}$.
\end{thm}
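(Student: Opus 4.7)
The approach will be to verify the orthogonality relation \eqref{GMOPS} for the polynomial $p_{\bs{n}\bs{m}}(\bs{x}_0)$ directly from its symbolic representation. The strategy parallels the one-variable case: expand $\bs{\Delta}^*_{\bs{n}-\bs{m}}$ via the Leibniz formula, collapse the resulting sum using the row antisymmetry of $\bs{\Delta}_{\bs{n}}$, and identify the outcome with the determinant of Theorem \ref{detformulti}.

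First, I would verify that $p_{\bs{n}\bs{m}}(\bs{x}_0)$ is a polynomial in $\bs{x}_0$ of the required form by expanding $\bs{\Delta}_{\bs{n}}(\bs{x}_0,\bs{x}_1,\ldots,\bs{x}_s)$ along its top row: this exhibits $p_{\bs{n}\bs{m}}(\bs{x}_0)$ as a linear combination of monomials $\bs{x}_0^{\bs{n}_j}$, $\bs{0}\leq\bs{n}_j\leq\bs{n}$, whose coefficients are the images under $\E_0$ of products of $\bs{\Delta}^*_{\bs{n}-\bs{m}}$ and the appropriate minors; the standing hypothesis that $p_{\bs{n}\bs{m}}$ is of degree $\bs{n}$ ensures that the coefficient of $\bs{x}_0^{\bs{n}}$ is nonzero.

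Next, for the orthogonality, I would fix $\bs{k}$ with $\bs{0}\leq\bs{k}\leq\bs{n}-\bs{m}$, write $\bs{k}=\bs{h}_l$ for some $0\leq l\leq r$, and transform
\[
\E\,\bs{x}_0^{\bs{k}}\,p_{\bs{n}\bs{m}}(\bs{x}_0)=\E\bigl[\bs{x}_0^{\bs{k}}\,\bs{\Delta}^*_{\bs{n}-\bs{m}}(\bs{x}_1,\ldots,\bs{x}_r)\bs{\Delta}_{\bs{n}}(\bs{x}_0,\bs{x}_1,\ldots,\bs{x}_s)\bigr],
\]
absorbing $\bs{x}_0^{\bs{k}}$ into row $0$ of $\bs{\Delta}_{\bs{n}}$. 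Writing $\bs{\Delta}^*_{\bs{n}-\bs{m}}(\bs{x}_1,\ldots,\bs{x}_r)=\sum_{\sigma\in\mathfrak{S}_r}\mathrm{sgn}(\sigma)\prod_{i=1}^{r}\bs{x}_i^{\bs{h}_{\sigma(i)}}$ via Leibniz, each $\sigma$-summand multiplies row $i$ of $\bs{\Delta}_{\bs{n}}$ by $\bs{x}_i^{\bs{h}_{\sigma(i)}}$ for $1\leq i\leq r$. The hypotheses $1,\ldots,r\in\P_1$ and $a_{\bs{k}}=a_{0\bs{k}}=a_{1\bs{k}}$ force $\E$ to treat $\bs{x}_0$ and the $\bs{x}_i$ ($i\in\P_1$) on the same footing; after $\E$, rows $0,1,\ldots,r$ are filled with moments of the form $a_{\bullet}$, while each row $i\geq r+1$ produces moments $a_{k(i),\,\bs{n}_j}$ depending on the class $\P_{k(i)}$ of $i$.

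Summing the $\sigma$-terms and invoking the antisymmetry of the determinant in rows $1,\ldots,r$ will collapse the sum to $r!$ times a single determinant $D$, whose top row after $\E$ equals $(a_{\bs{h}_l+\bs{n}_j})_{j=0}^{s}$. For $l\geq 1$ this coincides with the $l$-th moment row of $D$, so two rows of $D$ are equal and $D=0$; for $l=0$ the same conclusion will follow after matching $D$ with the shape of the determinant in Theorem \ref{detformulti} (with the free parameters $b_{ij}=a_{k(i+r),\,\bs{n}_j}$) and invoking that theorem. The hard part will be the bookkeeping of the Leibniz collapse and aligning the indexing conventions between Theorems \ref{detformulti} and \ref{morthpol}; the identity $a_{\bs{k}}=a_{0\bs{k}}=a_{1\bs{k}}$ is the essential ingredient, since it is precisely what permits the $\bs{x}_0$-row to masquerade as a $\P_1$-row and makes the repeated-row mechanism work uniformly.
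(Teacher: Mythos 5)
Your route---Leibniz expansion of $\bs{\Delta}^*_{\bs{n}-\bs{m}}$, collapse to $r!$ times a single determinant using the exchangeability of $\bs{x}_1,\ldots,\bs{x}_r$ under $\E$, and vanishing via a repeated row---is exactly the mechanism of the paper's proof, which packages the same idea as a sign-reversing swap $\bs{x}_0\leftrightarrow\bs{x}_j$ together with the vanishing criterion. For $\bs{k}=\bs{h}_l$ with $1\leq l\leq r$ your argument is complete and correct, and the identity $a_{\bs{k}}=a_{0\bs{k}}=a_{1\bs{k}}$ is indeed the essential ingredient, as you say.

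The genuine gap is the case $l=0$, i.e.\ $\bs{k}=\bs{0}$, which you defer to ``matching with Theorem \ref{detformulti}''. That matching cannot be carried out: after the collapse, the rows below the $\bs{x}_0$-row are the $r$ moment rows $(a_{\bs{h}_i+\bs{n}_j})_j$, $1\leq i\leq r$, together with $s-r$ rows $(a_{k(i)\,\bs{n}_j})_j$ whose classes $k(i)$ are unconstrained by the hypotheses, whereas the determinant of Theorem \ref{detformulti} has $r+1$ moment rows (indexed by $\bs{h}_0,\bs{h}_1,\ldots,\bs{h}_r$) and only $s-r-1$ free rows. The missing row $(a_{\bs{h}_0+\bs{n}_j})_j=(a_{\bs{n}_j})_j$ is precisely the one that would have to duplicate the top row when $\bs{k}=\bs{0}$, so no repeated-row argument is available there. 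This is not mere bookkeeping: for $d=0$, $\bs{n}=2$, $\bs{m}=1$ one has $r=1$, $s=2$, $p_{21}(x_0)=\E_0\,x_1\bs{\Delta}_{2}(x_0,x_1,x_2)$ with $1\in\P_1$, $2\in\P_2$, and a direct expansion gives
\[
\E\,p_{21}(x_0)=a_{22}(a_2-a_1^2)+a_{21}(a_1a_2-a_3)+(a_1a_3-a_2^2),
\]
which is nonzero for generic moments $a_{2k}$ on $\P_2$ (e.g.\ $a_1=a_3=0$, $a_2=1$ gives $a_{22}-1$), even though $\deg p_{21}=2$. So \eqref{GMOPS} fails at $\bs{k}=\bs{0}$ under the stated hypotheses; closing the gap requires one more exchangeable variable, e.g.\ assuming $r+1\in\P_1$ (then swap $\bs{x}_0\leftrightarrow\bs{x}_{r+1}$, both carrying trivial prefactor) or replacing $\bs{\Delta}^*_{\bs{n}-\bs{m}}(\bs{x}_1,\ldots,\bs{x}_r)$ by $\bs{\Delta}_{\bs{n}-\bs{m}}(\bs{x}_1,\ldots,\bs{x}_{r+1})$, which is the faithful analogue of $\Delta(x_1,\ldots,x_{n-m+1})$ in \eqref{symbp}. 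You should be aware that the paper's own proof glosses over the same point: its swap argument is vacuous for $j=0$.
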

\begin{proof}
Define
\[q(\bs{x}_0,\bs{x}_1,\ldots,\bs{x}_s)=\bs{x}_1^{\bs{h}_1}\bs{x}_2^{\bs{h}_2}\cdots\bs{x}_r^{\bs{h}_r}\bs{\Delta}_{\bs{n}}(\bs{x}_0,\bs{x}_1,\ldots,\bs{x}_{s}).\]
Observe that
\[\E\,\bs{x}_0^{\bs{h}_j}q(\bs{x}_0,\bs{x}_1,\ldots,\bs{x}_s)=0 \text{ for all }0\leq j\leq r.\]
In fact, since $1,2,\ldots,r\in\P_1$ and since
$a_{0\bs{k}}=a_{1\bs{k}}$ for all $\bs{k}\in\N^{d+1}$ then we may swap
$\bs{x}_0$ and $\bs{x}_j$ in
$\E\,\bs{x}_0^{\bs{h}_j}q(\bs{x}_0,\bs{x}_1,\ldots,\bs{x}_s)$
without affecting the value. On the other hand, such a swapping
will produce a change of sign in the determinant, hence $\E\,\bs{x}_0^{\bs{h}_j}q(\bs{x}_0,\bs{x}_1,\ldots,\bs{x}_s)=0$. In particular, for the polynomial
$p_{\bs{n}\bs{m}}(\bs{x}_0)=r!\,\E_0\,q(\bs{x}_0,\bs{x}_1,\ldots,\bs{x}_s)$
then, by virtue of \eqref{mE2}, we obtain
$\E\,\bs{x}_0^{\bs{k}}\,p_{\bs{n}\bs{m}}(\bs{x}_0)=0$ for all
$\bs{0}\leq \bs{k}\leq \bs{n}-\bs{m}$.

On the other hand, we may symmetrize $q(\bs{x}_0,\bs{x}_1,\ldots,\bs{x}_s)$ with respect to $\bs{x}_1,\bs{x}_2,\ldots,\bs{x}_r$ and again, being $1,2,\ldots,r\in\P_1$, the proof follows.
\end{proof}
Now, fix $\C=\R$ and assume that
\begin{equation}\label{Eintmulti}
\E[\bs{x}_i^{\bs{n}}]=\int_{\R^{d+1}}\bs{t}^{\bs{n}}\omega_j(\bs{t})d\bs{t} \;\text{ for all }i\in\P_j,
\end{equation}
where $\omega_j(\bs{t})=\omega_j(t_0,t_1,\ldots,t_d)$ is a suitable weight function and $d\bs{t}=dt_0dt_1\cdots dt_d$. In this case the following Heine integral formula can be deduced as a direct consequence of Theorem \ref{morthpol}.
\begin{cor}[Heine integral formula]
Under the hypotheses and notations of Theorem \ref{morthpol}, if $\E$ satisfies \eqref{Eintmulti} then we have
\footnotesize\begin{equation} \label{mintorth}
p_{\bs{n}\bs{m}}(\bs{x}_0)=\int_{\mathbb{R}^{(s+1)(d+1)}}\bs{\Delta}^*_{\bs{n}-\bs{m}}(\bs{x}_1,\bs{x}_2,\ldots,\bs{x}_{r})\bs{\Delta}_{\bs{n}}(\bs{x}_0,\bs{x}_1,\ldots,\bs{x}_{s})\prod_{i=1}^{s}\omega_i(\bs{x}_i)d\bs{x}_i.
\end{equation}\normalsize
\end{cor}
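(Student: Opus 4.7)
The plan is to derive the integral formula directly from Theorem \ref{morthpol} by specializing the action of $\E_0$ via the integral representation \eqref{Eintmulti}. Since the corollary is stated under all the hypotheses of Theorem \ref{morthpol}, I already have the symbolic identity
\[
p_{\bs{n}\bs{m}}(\bs{x}_0)=\E_0\,\bs{\Delta}^*_{\bs{n}-\bs{m}}(\bs{x}_1,\ldots,\bs{x}_{r})\bs{\Delta}_{\bs{n}}(\bs{x}_0,\bs{x}_1,\ldots,\bs{x}_{s}),
\]
and the task reduces to rewriting the right-hand side as a multivariate integral.

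The steps I would take are as follows. First, expand the product of the two determinants as a finite $\C[\bs{x}_0]$-linear combination of monomials $\bs{x}_1^{\bs{\alpha}_1}\bs{x}_2^{\bs{\alpha}_2}\cdots\bs{x}_s^{\bs{\alpha}_s}$. Since $\E_0$ fixes $\C[\bs{x}_0]$ pointwise and acts as $\E$ on $\bs{x}\setminus\bs{x}_0$, I can push $\E_0$ through the expansion and reduce matters to computing $\E\,\bs{x}_1^{\bs{\alpha}_1}\bs{x}_2^{\bs{\alpha}_2}\cdots\bs{x}_s^{\bs{\alpha}_s}$ for each monomial. Second, apply the multiplicativity axiom \eqref{mE2} to factor this moment as $\E\,\bs{x}_1^{\bs{\alpha}_1}\,\E\,\bs{x}_2^{\bs{\alpha}_2}\cdots \E\,\bs{x}_s^{\bs{\alpha}_s}$. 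Third, for each index $i$ use \eqref{Eintmulti} to replace the factor $\E\,\bs{x}_i^{\bs{\alpha}_i}$ with $\int_{\R^{d+1}}\bs{t}^{\bs{\alpha}_i}\omega_{j(i)}(\bs{t})\,d\bs{t}$, where $j(i)$ is the unique index with $i\in\P_{j(i)}$. Fourth, apply Fubini to reassemble the product of single integrals into a single integral over $\R^{(s+1)(d+1)}$ (the $+1$ accounting for the $\bs{x}_0$ slot, which is untouched), and bring the sum over monomials back inside the integral to recover the determinantal integrand.

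The only real subtlety, and the main place to be careful, is the indexing of the weight functions. The formula as written uses $\omega_i$ for $1\le i\le s$, which implicitly assumes the partition of $\P$ has been chosen so that the class containing index $i$ carries the weight denoted $\omega_i$; in particular, the hypothesis $1,2,\ldots,r\in\P_1$ from Theorem \ref{morthpol} forces $\omega_1=\omega_2=\cdots=\omega_r$ in the product, consistently with the definition \eqref{Lformmulti} and the convention $a_{\bs{k}}=a_{0\bs{k}}=a_{1\bs{k}}$. Once this bookkeeping is made explicit, the rest is just a direct substitution plus Fubini, with no additional analytic input beyond what \eqref{Eintmulti} already provides.
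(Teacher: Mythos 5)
Your argument is correct and is exactly the route the paper intends: the paper gives no separate proof, stating only that the formula is a ``direct consequence'' of Theorem \ref{morthpol}, and your expansion--multiplicativity--substitution--Fubini chain is the standard way to make that deduction explicit. Your remark on the weight-function indexing (and the fact that $\bs{x}_0$ is not actually integrated, so the effective domain is $\R^{s(d+1)}$) is a fair observation about the paper's bookkeeping, not a gap in your proof.
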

Classical orthogonal polynomial systems are uniquely determined up to a multiplicative factor. This is due to the fact that the space of all binary forms of degree $n$ that are apolar to a given form of degree $2n-1$ has, in general, dimension $1$. Moreover, classical orthogonal polynomials arise when polynomials with $m=1$ are extracted from a generalized orthogonal polynomial system $\{p_{nm}(x_0)\}$. This fails in the multivariable setting. In fact, if $\bs{\delta}$ is of rank $1$ (i.e. $\rho(\bs{\delta})=1$) then the space of all forms of degree $\bs{n}$ that are apolar to a given form of degree $2\bs{n}-\bs{\delta}$ does not have, in general, dimension $1$. Hence, the polynomial sequence obtained by extracting all $p_{\bs{n}\bs{m}}(\bs{x}_0)$ with $\bs{m}=\bs{\delta}$ is not uniquely determined, up to multiplicative factors. On the other hand, we may consider a further set of polynomials defined in the following way. Let $\bs{\delta}_0=(1,0,\ldots,0)$, $\bs{\delta}_1=(0,1,\ldots,0)$, \ldots, $\bs{\delta}_d=(0,0,\ldots,1)$ be the only $d+1$ elements in $\N^{d+1}$ having rank $1$. Moreover, consider the binary forms $\{f_i(\bs{x}_0,\bs{y}_0)\,|\,0\leq i\leq d\}$ defined by \eqref{Lformmulti} with $f_i(\bs{x}_0,\bs{y}_0)=f_{2\bs{n}-\bs{\delta}_i}(\bs{x}_0,\bs{y}_0)$. Then consider the space of all forms $g(\bs{x}_0,\bs{y}_0)$ of degree $\bs{n}$ such that
\[\{f_0,g\}=\{f_1,g\}=\ldots=\{f_d,g\}=0.\]
A form $g(\bs{x}_0,\bs{y}_0)$ of degree $\bs{n}$ is apolar to each $f_i(\bs{x}_0,\bs{y}_0)$ if and only if the polynomial $p_{\bs{n}}(\bs{x}_0)=\bs{e}_1(g(\bs{x}_0,\bs{y}_0))$ satisfies the
\[\E\,\bs{x}_0^{\bs{k}}\,p_{\bs{n}}(\bs{x}_0)=0 \text{ for all }\bs{0}\leq\bs{k}<\bs{n}.\]
This means that each $p_{\bs{n}}(\bs{x}_0)$ lives in a vector space whose dimension, in general, is $1$. In this case, $\{p_{\bs{n}}(\bs{x}_0)\}$ is uniquely determined up to multiplicative factors and this goes in parallel with the univariate case. Note that, this polynomial system satisfies
\[\E\,p_{\bs{n}}(\bs{x}_0)p_{\bs{m}}(\bs{x}_0)=0 \text{ whenever }\bs{m}<\bs{n} \text{ or }\bs{m}>\bs{n}.\]
Explicit formulae can be obtained by applying a reasoning which closely parallels the proofs of Theorem \ref{detformulti} and Theorem \ref{morthpol}. Hence, if $\{\bs{k}\,|\,\bs{0}\leq\bs{k}\leq\bs{n}\}=\{\bs{0}=\bs{k}_0,\bs{k}_1,\ldots,\bs{k}_s=\bs{n}\}$ then we can prove that
\[p_{\bs{n}}(\bs{x}_0)=
\begin{vmatrix}
1&\bs{x}_0^{\bs{k}_1}&\bs{x}_0^{\bs{k}_2}&\ldots&\bs{x}_0^{\bs{k}_s}\\
a_{\bs{0}}&a_{\bs{k}_1}&a_{\bs{k}_2}&\ldots&a_{\bs{k}_s}\\
a_{\bs{k}_1}&a_{\bs{k}_1+\bs{k}_1}&a_{\bs{k}_2+\bs{k}_1}&\ldots&a_{\bs{k}_s+\bs{k}_1}\\
a_{\bs{k}_2}&a_{\bs{k}_1+\bs{k}_2}&a_{\bs{k}_2+\bs{k}_2}&\ldots&a_{\bs{k}_s+\bs{k}_2}\\
\vdots&\vdots&\vdots&&\vdots\\
a_{\bs{k}_{s-1}}&a_{\bs{k}_{s-1}+\bs{k}_{1}}&a_{\bs{k}_{s-1}+\bs{k}_2}&\ldots&a_{\bs{k}_{s-1}+\bs{k}_s}
\end{vmatrix},\]
and
\[p_{\bs{n}}(\bs{x}_0)=\E_0\,\bs{\Delta}^*_{\bs{n}}(\bs{x}_1,\bs{x}_2,\ldots,\bs{x}_{s})\bs{\Delta}_{\bs{n}}(\bs{x}_0,\bs{x}_1,\ldots,\bs{x}_{s}).\]
Finally, if an integral representation of $\E$ exists then we also have
\small
\begin{equation}\label{lastint}
p_{\bs{n}}(\bs{x}_0)=\int_{\mathbb{R}^{(s+1)(d+1)}}\bs{\Delta}^*_{\bs{n}}(\bs{x}_1,\bs{x}_2,\ldots,\bs{x}_{s})\bs{\Delta}_{\bs{n}}(\bs{x}_0,\bs{x}_1,\ldots,\bs{x}_{s})\prod_{i=1}^{s}\omega(\bs{x}_i)d\bs{x}_i.\end{equation}\normalsize
\begin{ex}[Products of orthogonal polynomials]
Assume the linear functional $\E\colon\C[\bs{x}_0]\to\C$ has moments satisfying $a_{\bs{k}}=a_{k_0}a_{k_1}\cdots a_{k_d}$ for all $\bs{k}\in\N^{d+1}$ and for some fixed subset $\{a_i\,|\,i\in\N\}$ of $\C$. Then, if $\E\colon\C[x_{00}]\to\C$ admits an orthogonal polynomial system $\{p_n(x_{00})\}$, the orthogonal polynomial system $\{p_{\bs{n}}(\bs{x}_0)\}$ defined by \eqref{lastint} satisfies $p_{\bs{n}}(\bs{x}_0)=p_{n_0}(x_{00})p_{n_1}(x_{01})\cdots p_{n_d}(x_{0d})$. This means that products of all classical orthogonal polynomials fit in the framework of orthogonal polynomial systems in several variables that we have developed.
\end{ex}
\begin{ex}[Orthogonal polynomials of random vectors] 
Assume $\bs{x}_0=(x_{00},x_{01},\ldots,x_{0d})$ denotes a random vector on a fixed probability space, whose distribution admits a joint-probability density function $\omega\colon I\subseteq \R^{d+1}\to\R$. Then the polynomial sequence $\{p_{\bs{n}}(\bs{x}_0)\}$ defined by \eqref{lastint} is the unique sequence such that $p_{\bs{n}}(\bs{x}_0)$  is of degree $\bs{n}$ and such that
\[\int_I p_{\bs{n}}(\bs{t})p_{\bs{m}}(\bs{t})\,\omega(\bs{t})\,d\bs{t}=0 \text{ whenever }\bs{m}<\bs{n} \text{ or }\bs{m}>\bs{n}.\]
By choosing suitable random vectors, multivariable extensions of classical orthogonal polynomials are obtained. So, for instance, if $\bs{x}_0$ has joint-distribution of Gaussian type then $\{p_{\bs{n}}(\bs{x}_0)\}$ are multivariable Hermite polynomials and reduces to the classical Hermite polynomials when $d=0$.
\end{ex}

\end{document}